\documentclass[11pt]{article}
\usepackage{amsfonts,amssymb,amsmath,amsthm}

\usepackage{bm}
\usepackage{palatino}
\usepackage{mathpazo}
\usepackage{inconsolata}
\usepackage[hidelinks]{hyperref}

\usepackage{xcolor}
\usepackage{varwidth}
\usepackage{multicol}

\usepackage{tilings}
\usetikzlibrary{patterns}
\usetikzlibrary{calc}
\tikzset{invisible/.style={minimum width=0mm,inner sep=0mm,outer sep=0mm}}
\tikzset{tiling/.style={invisible, anchor=north west}}
\tikzset{ptnode/.style={rounded corners=15, black!40, thick, dashed}}


\begingroup
    \makeatletter
    \@for\theoremstyle:=definition,plain\do{%
        \expandafter\g@addto@macro\csname th@\theoremstyle\endcsname{%
            \addtolength\thm@preskip\parskip}%
        }
\endgroup

\usepackage[margin=1in]{geometry}

\theoremstyle{definition}
\newtheorem{theorem}{Theorem}
\newtheorem{proposition}[theorem]{Proposition}
\newtheorem{lemma}[theorem]{Lemma}

\newtheorem{remark}[theorem]{Remark}

\numberwithin{theorem}{section}
\numberwithin{question}{section}
\numberwithin{definition}{section}
\numberwithin{example}{section}

\pagestyle{plain}

\newcommand*\patchAmsMathEnvironmentForLineno[1]{%
  \expandafter\let\csname old#1\expandafter\endcsname\csname #1\endcsname
  \expandafter\let\csname oldend#1\expandafter\endcsname\csname end#1\endcsname
  \renewenvironment{#1}%
     {\linenomath\csname old#1\endcsname}%
     {\csname oldend#1\endcsname\endlinenomath}}%
\newcommand*\patchBothAmsMathEnvironmentsForLineno[1]{%
  \patchAmsMathEnvironmentForLineno{#1}%
  \patchAmsMathEnvironmentForLineno{#1*}}%
\AtBeginDocument{%
\patchBothAmsMathEnvironmentsForLineno{equation}%
\patchBothAmsMathEnvironmentsForLineno{align}%
\patchBothAmsMathEnvironmentsForLineno{flalign}%
\patchBothAmsMathEnvironmentsForLineno{alignat}%
\patchBothAmsMathEnvironmentsForLineno{gather}%
\patchBothAmsMathEnvironmentsForLineno{multline}%
}

\usepackage{parskip}
\usepackage{lineno}
\usepackage[small]{titlesec}


\usepackage[square,comma,numbers,sort&compress]{natbib}

\usepackage{color}
\definecolor{todocolor}{RGB}{205,235,139}
\definecolor{todo-idea}{RGB}{120,180,255}
\definecolor{todo-error}{RGB}{208,31,60}
\definecolor{todo-question}{RGB}{255,255,136}

\usepackage[colorinlistoftodos, color=todocolor, textsize=small]{todonotes}
\usepackage{colortbl}
\usepackage{tabularray}

\usepackage{ stmaryrd }
\usepackage{tikz}
\usepackage{pgfplots}
\pgfplotsset{width=10cm,compat=1.9}

\newcommand{\Av}{\operatorname{Av}}

\newcommand{\CC}{\mathcal{C}}

\newcommand{\Grid}{\operatorname{Grid}}

\newcommand{\SB}{\mathcal{SB}}

\newcommand{\SBH}{\mathcal{HSB}}



\renewenvironment{abstract}{
	\begin{list}{}%
	{\setlength{\rightmargin}{1in}%
	\setlength{\leftmargin}{1in}}%
	\item[]\ignorespaces\begin{small}}%
	{\end{small}\unskip\end{list}%
}

\newpagestyle{main}[\small]{
	\headrule{}
	\sethead[\usepage][][]
	{\sc }{}{\usepage}
}

\setlength{\parindent}{0pt}
\setlength{\parskip}{1.5ex}

\title{The Insertion Encoding of Cayley Permutations}

\usepackage{makecell}
\author{
	\begin{tabular}{c}
		\makecell{
			Christian Bean\\
			\small School of Computer Science\\
			\small and Mathematics\\
			\small Keele University\\
			\small Keele, United Kingdom\\
			\small \texttt{c.n.bean@keele.ac.uk}
		}
		\quad\quad
		\makecell{
			Paul C. Bell\\
			\small School of Computer Science\\
			\small and Mathematics\\
			\small Keele University\\
			\small Keele, United Kingdom\\
			\small \texttt{p.c.bell@keele.ac.uk}
		}\\ \\
		\makecell{
			Abigail Ollson\\
			\small School of Computer Science\\
			\small and Mathematics\\
			\small Keele University\\
			\small Keele, United Kingdom\\
			\small \texttt{a.n.ollson@keele.ac.uk}
		}
	\end{tabular}
}

\date{}


\usepackage{arydshln}
\usepackage{makecell}
\usepackage{mathtools} 
\DeclarePairedDelimiter{\ceil}{\lceil}{\rceil} 
\usepackage{graphicx} 
\usepackage{subcaption} 

\begin{document}
\maketitle

\begin{abstract}
	We introduce the vertical and horizontal insertion encodings for Cayley permutations which naturally generalise the insertion encoding for permutations. In both cases, we fully classify the Cayley permutation classes for which these languages are regular, and provide an algorithm for computing the rational generating functions. We use our algorithm to solve an open problem of Cerbai by enumerating the hare pop-stack sortable Cayley permutations.
\end{abstract}


\section{Introduction}
\label{sec:intro}

A \emph{Cayley permutation} is a word $\pi \in \mathbb{N}_1^*$ such that every number between $1$ and the maximum value of $\pi$ appears at least once. 
The plot of a Cayley permutation $\pi_1\pi_2 \cdots \pi_n$ is the set of points $(i, \pi_i)$. For example, the plot of the Cayley permutation $21321$ is shown in Figure~\ref{fig:21321}. 

The set of Cayley permutations is in bijection with the set of ordered set partitions; the value $i$ in the $j^{th}$ block of the ordered set partition implies the $i^{th}$ index of the Cayley permutation has value $j$. For example, the Cayley permutation $21321$ corresponds to the ordered set partition $\{2, 5\}, \{1, 4\}, \{3\}$. Therefore, the number of Cayley permutations of size $n$ is the $n^{th}$ Fubini number, A000670 in the Online Encyclopedia of Integer Sequences (OEIS)~\cite{oeis}.

\begin{figure}[h]
    \centering
    \resizebox{4cm}{!}{
        \begin{tikzpicture}
            \draw[step=1cm,gray,thin] (0.5,0.5) grid (5.5,3.5);
            \foreach \x/\y in {1/2, 2/1, 3/3, 4/2, 5/1}
            \fill[black] (\x,\y) circle (0.13cm);
        \end{tikzpicture}
    }
    \caption{The plot of the Cayley permutation $21321$.}
    \label{fig:21321}
\end{figure}

There are many objects which are equivalent to Cayley permutations, for example surjective words~\cite{Hazewinkel2004}, packed words~\cite{Foissy2010,Kroes2022}, initial words~\cite{Patrias2016}, and preferential arrangements~\cite{Ahlbach2012,BiersAriel2016, Spippenger2010,Nkonkobe2020}. Cayley permutations were first defined in 1983 by Mor and Fraenkel~\cite{Mor1984}. The name was inspired by a 1857 paper of Cayley~\cite{Cayley1857} where he counted a sub-class of trees which are enumerated by the ordered Bell numbers, also known as the Fubini numbers. Mor and Fraenkel called these trees \emph{Cayley trees} and found that they are in bijection with Cayley permutations. They also gave a new formula for enumerating Cayley permutations by counting the number of nondecreasing Cayley permutations and then counting the number of ways in which each of these can be permuted. 

The \emph{standardisation} of a word in $\mathbb{N}_1^*$ is the Cayley permutation obtained by replacing every occurrence of the smallest integer by $1$, the next smallest by $2$, and so on. For example, the standardisation of $4674$ is $1231$. We say that the word $w = w_1w_2\cdots w_n$ \emph{contains} the Cayley permutation $p = p_1p_2\cdots p_k$ if there exist indices $i_1 < i_2 < \cdots < i_k$ such that the standardisation of $w_{i_1}w_{i_2}\cdots w_{i_k}$ is $p$. We say $w$ \emph{avoids} $p$ if $w$ does not contain $p$. In this context, we call $p$ a pattern. For example, the Cayley permutation $21321$ contains multiple occurrences of the pattern $121$, such as the standardisation of $232$ or an occurrence of 121 directly, but $21321$ avoids the pattern $123$ as there is no strictly increasing subsequence of size 3 in $21321$.

This containment order forms a partial order on the set of Cayley permutations. A set of Cayley permutations that is closed downwards is called a \emph{Cayley permutation class}, i.e., if $\pi \in \CC$ and $\pi$ contains $\sigma$, then $\sigma \in \CC$. A Cayley permutation class can be uniquely defined by the set of minimal Cayley permutations not in $\CC$, called the \emph{basis}. For a set of Cayley permutations $B$ we say that a Cayley permutation $\pi$ \emph{contains} $B$ if $\pi$ contains any element of $B$, and \emph{avoids} $B$ if $\pi$ avoids every element of $B$. The set of Cayley permutations that avoid $B$ is denoted $\Av(B)$.

Cayley permutations can be seen as a generalisation of permutations where repeated values are allowed, hence the Cayley permutation class $\Av(11)$ describes all permutations. The theory of permutation classes has been very well developed. One of the first noteable results was by MacMahon in 1915~\cite{MacMahon1915} where he found that the Catalan numbers enumerate the class of permutations avoiding 123. In 1968, Knuth~\cite{Knuth68} asked as an exercise to prove that a permutation is stack sortable if and only if it avoids $231$. The permutation class $\Av(231)$ is also enumerated by the Catalan numbers. These classes can be thought of as the Cayley permutation classes $\Av(11,123)$ and $\Av(11,231)$ respectively. 
Knuth's work brought much more focused attention to the area of pattern-avoiding permutations eventually leading to the first systematic study of pattern-avoiding permutations in 1985 by Simion and Schmidt~\cite{Simion1985}. For a more detailed overview see for example the survey by Vatter~\cite{vatter2015permutation}.

The goal of this work is to extend some of the methods that have been developed for permutation classes to study Cayley permutation classes. In particular, we extend the insertion encoding of permutations introduced by Albert, Linton and Ru\v{s}kuc~\cite{Albert2005} to Cayley permutations in two different ways; first, by inserting new maxima, which we will call the \emph{vertical insertion encoding} and then by inserting new rightmost values, which we will call the \emph{horizontal insertion encoding}. As permutations are closed with respect to the inverse symmetry, inserting a new maximum is equivalent to inserting a new rightmost value so these two methods are equivalent. For Cayley permutations this is no longer the case. 
Section~\ref{sec:insertionencoding} and Section~\ref{sec:regular-theory} explain the method for the vertical insertion encoding and Section~\ref{sec:left_to_right} considers the horizontal insertion encoding. 

Our main results are in Section~\ref{sec:regular-theory} and Section~\ref{sec:left_to_right} which fully classify the Cayley permutation classes for which the vertical and horizontal insertion encodings form a regular language and follow a similar theory to the theory for permutations. We outline a method for computing these languages for the vertical insertion encoding in Section~\ref{sec:regular-algo}, which we also adapted to the horizontal case. Our implementation of these methods has been made available on GitHub~\cite{Bean_cperms_ins_enc_2025}.

Table~\ref{tab:results for size 3} shows the number of Cayley permutation classes defined by avoiding size 3 patterns which have either a set of vertical or horizontal insertion encodings which form a regular language. Every class defined by avoiding nine or more size three patterns has either a set of vertical or horizontal insertion encodings which is regular.
Of the 8191 bases containing Cayley permutations of size 3 only, 7498 of them have a set of vertical or horizontal insertion encodings which is regular.

\begin{table}[h]
    \centering
    \begin{tabular}{|c|c|c|c|c|}
        \hline
                      & Number     & Set of vertical     & Set of horizontal   & Either vertical         \\
        Size of basis & of classes & insertion encodings & insertion encodings & or horizontal insertion \\
                      &            & are regular         & are regular         & encodings are regular   \\
        \hline
        1             & 13         & 0                   & 0                   & 0                       \\
        2             & 78         & 0                   & 13                  & 13                      \\
        3             & 286        & 87                  & 111                 & 145                     \\
        4             & 715        & 435                 & 428                 & 528                     \\
        5             & 1287       & 1028                & 986                 & 1124                    \\
        6             & 1716       & 1550                & 1513                & 1625                    \\
        7             & 1716       & 1645                & 1631                & 1687                    \\
        8             & 1287       & 1269                & 1267                & 1283                    \\
        9             & 715        & 713                 & 713                 & 715                     \\        
        \hline
    \end{tabular}
    \caption{The number of Cayley permutation classes defined by avoiding a set of size 3 patterns. In the third and fourth columns, the number of classes with a set of vertical insertion encodings and a set of horizontal insertion encodings which are regular are shown. The last column shows the number of classes with either a set of vertical or horizontal insertion encodings which are regular.}
    \label{tab:results for size 3}
\end{table}

Recently, Claesson, Cerbai, Ernst and Golab~\cite{Claesson2024} enumerated $\Av(111)$, $\Av(112)$, $\Av(121)$, $\Av(123)$ and $\Av(132)$ using combinatorial species. Golab's thesis~\cite{Golab2024} went on to enumerate $\Av(112, 121)$ and $\Av(111, 112)$. None of these classes have either a set of vertical or horizontal insertion encodings which are regular.

Defant and Kravitz~\cite{Defant2018} have adapted the theory behind stack sortable permutations to Cayley permutations. 
They defined two different types of stack sorting, a hare-stack sort and a tortoise-stack sort. In a hare-stack sort, consecutive repeated values are allowed in the stack and in a tortoise-stack sort they are not.
They identified the classes of Cayley permutations that are hare-stack sortable and tortoise-stack sortable as $\Av(231)$ and $\Av(221,231)$ respectively, and found a recurrence relation for their counting sequences.
Cerbai~\cite{Cerbai2021} then found the bases for tortoise pop-stack sortable Cayley permutations to be $\Av(211, 221, 231, 312)$ and hare pop-stack sortable to be $\Av(231, 312, 2121)$, enumerating $\Av(211, 221, 231, 312)$. Using the insertion encoding, implemented using Tilings described in Section~\ref{sec:regular-algo}, we present the generating function for $\Av(231, 312, 2121)$ in Section~\ref{sec:algo-code}.

\section{The insertion encoding}
\label{sec:insertionencoding}

Albert, Linton and Ru\v{s}kuc~\cite{Albert2005} introduced the \emph{insertion encoding} for permutations and studied he languages which these form, which generalises the finitely labelled generating trees of  Chung, Graham, Hoggatt, and Kleiman~\cite{Chungetal}. The insertion encoding encodes how a permutation is built up by iteratively adding new maxima. The set of all insertion encodings of permutations in a class forms a language. Albert, Linton and Ru\v{s}kuc~\cite{Albert2005} studied these languages for permutation classes, in particular, classifying for which permutation classes the set of insertion encodings forms a regular language. They also gave a linear time algorithm in the size of the basis elements for checking whether a finitely based permutation class has a set of insertion encodings which form a regular language. Vatter~\cite{Vatter2009} gave an algorithm for computing a Deterministic Finite Automaton (DFA) for these regular languages which could then be used to compute the rational generating function of the permutation classes. An alternative algorithm using the tilescope algorithm~\cite{combexp} was later given, which experimentally appears to be significantly faster.

Unlike permutations, Cayley permutations can have repeated numbers, so in order to generalise this process to build Cayley permutations we will need to allow the insertion of a repeated maximum number. To ensure that it is unique we will build the Cayley permutations by inserting repeated values from left to right, which we call the \emph{vertical insertion encoding} in Section~\ref{sec:intro}. Choosing leftmost values was an arbitrary decision and would work just as well inserting repeated values from right to left.

Tracing how to build a Cayley permutation in this way starting from the empty Cayley permutation is called the \emph{evolution} of a Cayley permutation. In the evolution, we keep track of where future points will be inserted using a symbol `$~\diamond~$' called a \emph{slot}. For example, the evolution of the Cayley permutation $31232$ is shown in Figure~\ref{fig:evolution}.

\begin{figure}[h]
    \centering
    \begin{tabular}{cc}
         & $~\diamond~$                   \\
         & $~\diamond~ 1 ~\diamond~$      \\
         & $~\diamond~ 1~2~\diamond~$     \\
         & $~\diamond~ 1~2 ~\diamond~ 2~$ \\
         & $~3~1~2 ~\diamond~ 2~$         \\
         & $~3~1~2~3~2~$
    \end{tabular}
    \caption{The evolution of the Cayley permutation $31232$.}
    \label{fig:evolution}
\end{figure}

Each line of the evolution, such as the string $~\diamond~ 12~\diamond~ $, is called a \emph{configuration}.

For a configuration, we index the slots from left to right, starting with $1$. An insertion is determined by three things: the index of the slot inserted into, the way the slot is affected by the insertion, and whether or not the insertion is a repeated current maximum value.

There are four possibilities for how $n$ can be inserted into a slot, which we will represent with the letters $\ell, m, r, f$ denoting left, middle, right and fill respectively. Figure~\ref{tab:letters} shows the four ways to replace a slot with a combination of the number $n$ and some new slots.
\begin{figure}
    \centering
    \begin{tabular}{cccc}
        $\ell$: & $~\diamond~$ & $\rightarrow$ & $n ~\diamond~$            \\
        $m$:    & $~\diamond~$ & $\rightarrow$ & $~\diamond~ n ~\diamond~$ \\
        $r$:    & $~\diamond~$ & $\rightarrow$ & $~\diamond~ n$            \\
        $f$:    & $~\diamond~$ & $\rightarrow$ & $n$                       \\
    \end{tabular}
    \caption{The four ways to insert a number $n$ into a slot.}
    \label{tab:letters}
\end{figure}

To encode Cayley permutations, for each insertion we will use letters of the form $a_{i,j}$ with $a \in \{\ell,r,m,f\}$, $i \in \mathbb{N}_1$ is the index of the slot being inserted into, and $j \in \{0, 1\}$ tell us if we are inserting a repeated maximum value (0) or a new maximum value (1). Returning to our example $31232$, the vertical insertion encoding for this Cayley permutation will be the word $m_{1,1}\ell_{2, 1}r_{2, 0}f_{1,1}f_{1,0}$, shown in Figure~\ref{fig:example_encoding}.

\begin{figure}[h]
    \centering
    \begin{tabular}{ccc}
         & $~\diamond~$                   &               \\
         & $~\diamond~ 1 ~\diamond~$      & $m_{1, 1}$    \\
         & $~\diamond~ 1~2 ~\diamond~ $   & $\ell_{2, 1}$ \\
         & $~\diamond~ 1~2 ~\diamond~ 2~$ & $r_{2, 0}$    \\
         & $~3~1~2 ~\diamond~ 2~$         & $f_{1, 1}$    \\
         & $~3~1~2~3~2~$                  & $f_{1, 0}$
    \end{tabular}
    \caption{The evolution of a Cayley permutation 31232 with the encoding shown with each insertion.}
    \label{fig:example_encoding}
\end{figure}

For a given configuration, not every slot can be inserted into with a repeated value to ensure a unique evolution for every Cayley permutation. For example, for the configuration $~\diamond~ 1$ only new maxima can be inserted, so the only valid letters are $\ell_{1,1}, m_{1,1}, r_{1,1}$ and $f_{1,1}$ giving the configurations $2~\diamond~ 1 $, $~\diamond~ 2 ~\diamond~ 1$, $~\diamond~ 2 1$ and $21$ respectively.
But for the configuration $1 ~\diamond~$, either a new maximum or a repeated value can be inserted into the slot so any of the letters $\ell_{1,0}, m_{1,0}, r_{1,0}, f_{1,0}$, $\ell_{1,1}, m_{1,1}, r_{1,1}$ and $f_{1,1}$ can be applied, resulting in the configurations $1 1~\diamond~$, $1 ~\diamond~ 1 ~\diamond~$, $1 ~\diamond~ 1$, $11$, $1 2~\diamond~$, $1 ~\diamond~ 2 ~\diamond~$, $1 ~\diamond~ 2$ and $12$ respectively.

For a configuration $c$ we call the \emph{derivations} of $c$ the set of all Cayley permutations which have $c$ in its evolution.

\subsection{An example - the class $\Av(211, 312)$}
\label{sec:Av(312,212)}
In this subsection we will show how to enumerate a class of Cayley permutations using the vertical insertion encoding. 
In their paper on the insertion encoding of permutations, Albert, Linton and Ru\v{s}kuc~\cite{Albert2005} prove that the permutation class $\Av(11, 312)$ is counted by the Catalan numbers by finding a context-free grammar for it. They also prove that every insertion is in the first slot of a configuration in the evolution of the permutations in $\Av(11, 312)$.
We will prove in Proposition~\ref{prop:312_212_insertion_encoding} that this is also true of the Cayley permutation class $\Av(211, 312)$ for the vertical insertion encoding and show that is context-free by giving a non-ambiguous grammar for it.

\begin{proposition}
      \label{prop:312_212_insertion_encoding}
      A Cayley permutation avoids 211 and 312 if and only if its vertical insertion encoding only uses letters of the form $a_{1, j}$ for $a \in \{\ell, r, m, f\}$ and $j \in \{0, 1\}$.
\end{proposition}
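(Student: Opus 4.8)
The plan is to prove the two implications at once by establishing the following sharper statement: in the (unique) evolution of a Cayley permutation $\pi=\pi_1\cdots\pi_n$, the insertion that places a given entry uses a slot of index strictly greater than $1$ if and only if that entry is the last index of an occurrence of $211$ or $312$. Summing over all entries then shows that some letter of the encoding has the form $a_{i,j}$ with $i\ge 2$ exactly when $\pi$ contains $211$ or $312$, which is the contrapositive of the proposition. The whole argument is driven by a careful description of the configuration present at the moment each entry is inserted.

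First I would pin down that configuration. By the conventions that fix the encoding, entries are inserted in increasing order of value and, within a fixed value, from left to right. Hence, writing $v=\pi_q$, at the instant the copy of $v$ sitting at position $q$ is about to be inserted, the already-committed cells are precisely the positions carrying a value $<v$ together with the positions $<q$ carrying the value $v$; every other position---in particular $q$ itself---is still ``open'', and the open positions are grouped into slots as maximal runs of consecutive open positions. I would justify this by a short induction on the evolution. The entry at $q$ is then inserted into the slot containing $q$, whose index equals $1$ plus the number of slots lying entirely to its left.

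The core observation is now purely combinatorial: a slot lies entirely to the left of the slot containing $q$ if and only if there is an open position $p<q$ separated from $q$ by a committed position $c$, that is $p<c<q$. Because all value-$v$ cells to the left of $q$ are committed, any open position to the left of $q$ must carry a value $>v$, so $\pi_p>v$; and being committed forces $\pi_c\le v$. Thus the inserted copy uses a slot of index $\ge 2$ exactly when there exist $p<c<q$ with $\pi_p>\pi_q\ge\pi_c$. Finally I would read off the pattern: if $\pi_c=\pi_q$ the triple $(\pi_p,\pi_c,\pi_q)$ standardises to $211$, while if $\pi_c<\pi_q$ it standardises to $312$; conversely every occurrence of $211$ or $312$ furnishes such a triple with $q$ as its last index. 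This yields the desired equivalence, and hence the proposition.

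The step I expect to be most delicate is the inductive justification of the committed/open picture of the configuration, since it is exactly here that the uniqueness conventions (increasing values, left-to-right within a fixed value) are used; one must check that these conventions really do commit the value-$v$ cells lying left of $q$ and no others. A second point requiring care is the remark that an open cell to the left of $q$ necessarily has value strictly greater than $v$ rather than merely $\ge v$, which is what pins the witnessing pattern down to $211$ or $312$ and rules out spurious equalities.
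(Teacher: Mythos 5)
Your proof is correct, and it takes a genuinely different route from the paper's. The paper argues the two contrapositive directions separately: for one direction it notes that an insertion into a slot of index $i>1$ leaves an occurrence of $11$ or $12$ to the right of the first slot, and that the value eventually inserted into the first slot is strictly larger than everything to its right, completing a $211$ or $312$; for the other it tracks an occurrence of $211$ or $312$ and argues the middle $1$ is inserted first, forcing the next value of the occurrence into a slot that cannot be the first. You instead prove a single sharper statement: the letter encoding the entry at position $q$ has slot index $\geq 2$ if and only if $q$ is the last index of an occurrence of $211$ or $312$. This rests on your global description of the configuration at each step (committed cells are exactly the entries of value less than $v=\pi_q$ together with the value-$v$ entries left of $q$; slots are the maximal runs of open positions), which does follow by the routine induction you flag, since each of $\ell,m,r,f$ splits a run of open positions in the expected way, and the uniqueness conventions (values inserted in increasing order, repeats left to right) pin down the committed set. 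Your reduction of ``slot index $\geq 2$'' to the existence of $p<c<q$ with $\pi_p>\pi_q\geq\pi_c$ is also sound: an open $p<q$ forces $\pi_p>v$ strictly (all value-$v$ entries left of $q$ are committed), and a committed $c<q$ forces $\pi_c\leq v$, which is exactly what rules out spurious patterns. What your approach buys is a per-letter localisation of where non-first-slot insertions occur, which is strictly more informative than the proposition and arguably cleaner to verify; what the paper's buys is brevity, as it never needs the explicit committed/open picture of configurations.
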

\begin{proof}
      Only using letters of this form is equivalent to every insertion being into the first slot of the configuration.
      We will prove the statement by proving the contrapositive of each direction, first that if the evolution of a Cayley permutation inserts into a slot that is not the first slot then it contains 211 or 312. 
      Let $\pi$ be a Cayley permutation whose evolution contains an insertion into a slot that is not the first slot. Suppose this slot has index $i > 1$.
      As there exists at least one value between any two slots in a configuration, there is at least one value between the first slot and the $i^{th}$ slot. This value, together with the value inserted in the $i^{th}$ slot, will form an occurrence of either $12$ or $11$. 
      
      At some point in the evolution, the first slot will be inserted into. The value inserted into the first slot will be strictly larger than all values to the right. As there is an occurrence of either $12$ or $11$ to the right of the first slot, inserting this new value will create an occurrence of either $211$ or $312$. As $\pi$ contains either 211 or 312, $\pi \not\in \Av(211, 312)$.
      
      We now prove that if a Cayley permutation contains 211 or 312 then its evolution inserts into a slot that is not the first slot. Suppose $\pi$ is a Cayley permutation which contains 211 or 312.
      In both 211 and 312, the middle 1 is the leftmost smallest value so if $\pi$ contains either of these patterns then this middle 1 of any occurrence was inserted first in the evolution of $\pi$.
      
      Suppose in the evolution of $\pi$ some value $n$ is inserted to create a configuration $c$ and this occurrence of $n$ is at index $j$ in $c$.
      For this to later play the role of the middle 1 in either 211 or 312, there must be a slot at an index smaller than $j$ in $c$.
      By the rule of inserting smallest leftmost values first, the next value to be inserted for either 211 or 312 will be the 2 in 312 or the other 1 in 211. Both of these must be inserted into a slot at index larger than $j$ in $c$ which is not the first slot, so the evolution of $\pi$ must insert into a slot that is not the first slot.
\end{proof}

We have the following conditions for the set of vertical insertion encodings of Cayley permutations in $\Av(211, 312)$:
\begin{itemize}
      \item 
            As usual, when inserting the first value we must insert a new maximum value.
      \item 
            After an $\ell$ insertion, the next slot will be on the right of the last value that was inserted, so may be a new maximum or a repeated value.
      \item
            After an $r$ insertion, the next slot will be on the left of the last value that was inserted, so must be a new maximum.
      \item
            After an $m$ insertion, of the two slots we have inserted the first must begin with a new maximum and the second may be a new maximum or a repeated value.
\end{itemize}
This gives us the following non-ambiguous context free grammar for the set of vertical insertion encodings of $\Av(211, 312)$. The start symbol is $S$ which represents inserting a new maximum and $C$ represents inserting either a new maximum or a repeated value.

\begin{equation*}
      \begin{split}
            S & \to f_{1,1} \: |\: \ell_{1,1}C \:|\: r_{1,1} S \:|\: m_{1,1} SC         \\
            C & \to S \:|\: f_{1,0} \:|\: \ell_{1,0}C \:|\: r_{1,0} S \:|\: m_{1,0} SC
      \end{split}
\end{equation*}

Let $S(x)$ be the generating function for $S$ and $C(x)$ be the generating function for $C$. Then we have 

\begin{equation*}
      \begin{split}
            S(x) & = x + xC(x) + xS(x) + xS(x)C(x),         \\
            C(x) & = S(x) + x + xC(x) + xS(x) + xS(x)C(x),
      \end{split}
\end{equation*}

which solves to give the generating function
\begin{equation*}
      \begin{split}
            S(x) & = \frac{1-3x-\sqrt{1-6x+x^2}}{4x}
      \end{split}
\end{equation*}
for the sequence beginning $1, 3, 11, 45, 197, 903, 4279, 20793, 103049, 518859$ which is the little Schroeder numbers, A001003 in the OEIS~\cite{oeis}.

\section{Classifying regular languages}
\label{sec:regular-theory}
We will now focus on regular languages. 
The set of insertion encodings of a Cayley permutation class forms a regular language if it can be represented by a deterministic finite automaton for which there exists some $k \in \mathbb{N}_1$ such that from any state there exists a path of size $\leq k$ to an accept state.
Representing states as configurations and accept states as Cayley permutations,
from any configuration with $s$ slots the minimum number of insertions needed to obtain a Cayley permutation is $s$. Hence, for any class with a set of vertical insertion encodings which form a regular language, there is a bound on the number of slots in any configuration in the evolution of any Cayley permutation in the class. We will call such a class \emph{slot-bounded}.

In Section~\ref{sec:regular-algo}, we will show for finitely-based Cayley permutation classes that being slot-bounded is also a sufficient condition for it's vertical insertion encodings to form a regular language. We will do this by presenting an algorithm for finding a DFA for the language. In this section, we focus on giving a characterisation of slot-bounded Cayley permutation classes. 

Let $\SB(k)$ be the set of Cayley permutations with evolutions that contain configurations with at most $k$ slots. For each $k$, this is a finitely based Cayley permutation class. The following proposition allows us to compute the minimal Cayley permutations not in the class.

\begin{proposition}
    \label{prop: basis SB(k)}
    For each positive integer $k$, the set $\SB(k)$ is the set of Cayley permutations that avoid all size $2k + 1$ derivations of configurations of the form $~\diamond~ a_1 ~\diamond~ a_2 ~\diamond~ \cdots ~\diamond~ a_k ~\diamond~$, where $a_1a_2 \cdots a_k$ is any size $k$ Cayley permutation.
\end{proposition}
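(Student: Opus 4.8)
My plan is to prove the two inclusions $\SB(k) \subseteq \Av(B)$ and $\Av(B) \subseteq \SB(k)$, where $B$ is the stated set of size $2k+1$ derivations of the configurations $\diamond a_1 \diamond \cdots \diamond a_k \diamond$. Two facts would underpin everything. First, in any configuration the slots are exactly the maximal runs of not-yet-inserted positions, so (as already noted in the paper) between two consecutive slots there is at least one inserted value; moreover the slot count starts at $1$ and changes by $+1$ under an $m$ insertion, $-1$ under $f$, and $0$ under $\ell$ or $r$, so it moves in unit steps. Second, the evolution of a Cayley permutation is obtained by inserting its entries in increasing order of value, breaking ties from left to right. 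This second fact is the key tool: since standardisation preserves both the order of values and the order of positions, the evolution of a sub-pattern $\pi|_T$ is exactly the evolution of $\pi$ restricted to the positions $T$, and at every stage the set of inserted positions of $\pi|_T$ equals $T \cap P$ for some prefix $P$ of the insertion order of $\pi$.

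For $\SB(k) \subseteq \Av(B)$ I would first observe that every $\beta \in B$ has, by definition, the configuration $\diamond a_1 \diamond \cdots \diamond a_k \diamond$ in its evolution, and this configuration has $k+1 > k$ slots, so $\beta \notin \SB(k)$. Since $\SB(k)$ is downward closed (it is a Cayley permutation class), no member of $\SB(k)$ can contain something lying outside $\SB(k)$; hence every $\pi \in \SB(k)$ avoids all of $B$. If one prefers not to cite downward closure, it follows from the restriction principle above: passing to $T$ can only merge maximal runs of uninserted positions, so the maximum slot count over an evolution is monotone under taking sub-patterns.

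The substance is the reverse inclusion, which I would prove contrapositively: if $\pi \notin \SB(k)$ then $\pi$ contains an element of $B$. Let $c^*$ be the first configuration in the evolution of $\pi$ with $k+1$ slots, which exists because the slot count starts at $1$ and increases in unit steps. Its $k+1$ slots are $k+1$ maximal runs of uninserted positions; between consecutive runs there is at least one inserted position, so I choose an inserted position $a_i$ strictly between the $i$-th and $(i{+}1)$-th slot for $i=1,\dots,k$, and since every slot is eventually filled I choose an uninserted position $b_i$ inside the $i$-th slot for $i=1,\dots,k+1$. Writing $T=\{a_1,\dots,a_k,b_1,\dots,b_{k+1}\}$, the positions occur left to right as $b_1 a_1 b_2 a_2 \cdots b_k a_k b_{k+1}$, so $\rho := \pi|_T$ has size $2k+1$; it remains to show $\rho \in B$.

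I expect the verification that $\rho\in B$ to need the most care, and this is where the prefix principle pays off. Letting $P$ be the prefix of $\pi$'s insertion order that produces $c^*$, each $a_i$ lies in $P$ while each $b_i$ does not, so in the evolution of $\rho$ all of $a_1,\dots,a_k$ are inserted before any of $b_1,\dots,b_{k+1}$. After those first $k$ insertions the inserted positions of $\rho$ are precisely $a_1,\dots,a_k$ (the even positions of $\rho$) and the uninserted ones are the isolated $b_1,\dots,b_{k+1}$, giving exactly $k+1$ singleton runs, i.e.\ the configuration $\diamond a_1' \diamond \cdots \diamond a_k' \diamond$ where $a_1'\cdots a_k'$ is the standardisation of $a_1\cdots a_k$. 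The apparent danger is that some $a_i$ and $b_j$ are equal in value (the $b$'s are maxima relative to $c^*$, hence need not be strictly larger than the $a$'s), which could in principle collapse the slot count as it does for a constant word such as $1\cdots1$. The resolution is that the insertion order here is dictated by the prefix $P$ rather than by values alone: because $P$ is a genuine prefix, the $a$'s must precede the $b$'s in $\rho$'s evolution regardless of ties, and the interleaving $b_1 a_1 \cdots b_{k+1}$ is moreover incompatible with all entries being equal. Thus $\rho$ is a size $2k+1$ derivation of $\diamond a_1' \diamond \cdots \diamond a_k' \diamond$, so $\rho \in B$ and $\pi$ contains it, completing the inclusion.
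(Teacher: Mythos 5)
Your proof is correct and takes essentially the same approach as the paper's: the forward inclusion follows because the defining configurations have $k+1$ slots and $\SB(k)$ is downward closed, and the reverse inclusion locates a configuration with exactly $k+1$ slots in the evolution and selects one inserted value between each pair of consecutive slots plus one future value per slot, yielding a size $2k+1$ derivation of $\diamond\, a_1' \,\diamond \cdots \diamond\, a_k' \,\diamond$ contained in $\pi$. The only difference is that you spell out, via the prefix/restriction principle for the insertion order, why the chosen subpattern genuinely is a derivation of that configuration (including the tie-breaking subtlety), a verification the paper's proof treats as immediate.
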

\begin{proof}
    Clearly, any Cayley permutation that is a derivation from such a configuration is not in $\SB(k)$. 
    
    Take a Cayley permutation $\pi$ not in $\SB(k)$. The evolution of $\pi$ must have a configuration with exactly $k + 1$ slots in its evolution as each insertion increases the number of slots by at most $1$. First, pick a number between each of the $k + 1$ slots, say $a_1, \ldots, a_k$. Second, pick a value of $\pi$ to fill each slot so that we have $\sigma_1a_1\sigma_2a_2...a_k\sigma_{K+1}$. These $2k + 1$ numbers will create a Cayley permutation of size $2k + 1$ that is a derivation of the configuration $~\diamond~ a_1 ~\diamond~ a_2 ~\diamond~ \cdots ~\diamond~ a_k ~\diamond~$.
\end{proof}

For $k=1$, the class $\SB(1)$ contains Cayley permutations whose evolutions have at most 1 slot. By Proposition \ref{prop: basis SB(k)}, the basis of $\SB(1)$ is all size 3 derivations of the configuration $~\diamond~ 1 ~\diamond~$, so $\SB(1) = \Av(112, 212, 213, 312)$. 
This class can also be described as the set of all Cayley permutations whose vertical insertion encodings do not use letters of the form $m_{i, j}$ for any $i$ and $j$.
If the slot in a configuration can have a repeated value inserted into it then we will describe that state as $A$, otherwise we will describe the state as $S$. Using this notation and beginning at $S$, we give the following regular grammar for the set of vertical insertion encodings of $\Av(112, 212, 213, 312)$.
\begin{equation*}
    \begin{split}
        S & \to f_{1,1} \: |\: \ell_{1,1}A \:|\: r_{1,1} S         \\
        A & \to S \:|\: f_{1,0} \:|\: \ell_{1,0}A \:|\: r_{1,0} S
    \end{split}
\end{equation*}
Let $S(x)$ be the generating function for $S$ and $A(x)$ be the generating function for $A$. Then
\begin{equation*}
    \begin{split}
        S(x) & = x + xA(x) + xS(x),        \\
        A(x) & = S(x) + x + xA(x) + xS(x), \\
             & = 2S(x),
    \end{split}
\end{equation*}
which solves to give the generating function

\begin{equation*}
    \begin{split}
        S(x) & = \frac{x}{1-3x}
    \end{split}
\end{equation*}
for the sequence beginning $1, 3, 9, 27, 81, 243$ which is A000244 in the OEIS~\cite{oeis}.

For a Cayley permutation class to be slot-bounded it must be a subclass of $\SB(k)$ for some $k$. In order to determine when this is the case, we need a few definitions.
A Cayley permutation $\pi$ with maximum value $n+k$ is a \emph{vertical juxtaposition} of a Cayley permutation $\sigma$ with maximum value $n$ and a Cayley permutation $\tau$ with maximum value $k$ if all entries in $\pi$ with values $n$ or smaller standardise to $\sigma$ and all entries in $\pi$ with values larger than $n$ standardise to $\tau$. For example, a vertical juxtaposition of $12432$ and $221$ is $16624352$ as the entries with value four and smaller are $12432$ and the entries with value larger than four are $665$.

There are nine classes of vertical juxtapositions which are central to our theory. They arise from vertically juxtaposing strictly increasing, strictly decreasing or constant sequences. We will use $\mathcal{V}_{a,b}$ with $a,b \in \{D, I, C \}$ to denote the Cayley permutation class which is the vertical juxtaposition of a strictly increasing sequence if $a = I$, strictly decreasing sequence if $a = D$ and constant sequence if $a = C$ with a strictly increasing sequence if $b = I$, strictly decreasing sequence if $b = D$ and constant sequence if $b = C$. For example, the Cayley permutation $23141$ belongs to the class $\mathcal{V}_{C, I}$ and $21122$ to $\mathcal{V}_{C,C}$, as shown in Figure \ref{fig: examples of vertical jux}. Each of $I$, $D$ and $C$ can be empty so $111 \in \mathcal{V}_{C,C}$ for example.

\begin{figure}[h]
    \begin{center}
        \resizebox{10cm}{!}{
            \begin{tikzpicture}
                \draw[step=1cm,gray,thin] (0.5,0.5) grid (5.5,4.5);
                \foreach \x/\y in {1/2, 2/3, 3/1, 4/4, 5/1}
                \fill[black] (\x,\y) circle (0.14cm);
                \draw[thick, dashed] (0.5,1.5) -- (5.5,1.5);
                \node[font = \Large] at (3, 0) {$23141 \in \mathcal{V}_{C, I}$  is a vertical };
                \node[font = \Large] at (3, -0.7) {juxtaposition of 11 and 123};
            \end{tikzpicture}
            \phantom{----}
            \begin{tikzpicture}
                \draw[step=1cm,gray, thin] (0.5,0.5) grid (5.5,2.5);
                \foreach \x/\y in {1/2, 2/1, 3/1, 4/2, 5/2}
                \fill[black] (\x,\y) circle (0.14cm);
                \draw[thick, dashed] (0.5,1.5) -- (5.5,1.5);
                \node[font = \Large] at (3, 0) {$21122 \in \mathcal{V}_{C, C}$  is a vertical};
                \node[font = \Large] at (3, -0.7) { juxtaposition of 11 and 111};
            \end{tikzpicture}
        }
    \end{center}
    \caption{Examples of vertical juxtapositions.}
    \label{fig: examples of vertical jux}
\end{figure}

A Cayley permutation can be in more than one of these nine classes, such as $23145$. The sequence $2345$ is strictly increasing, and the sequence $1$ is strictly increasing, strictly decreasing, and constant, hence $23145$ is in $\mathcal{V}_{I, I}$, $\mathcal{V}_{D, I}$ and $\mathcal{V}_{C, I}$. This is always the case when the values can be split so that there is a single $1$ on its own, hence the Cayley permutations $12$ and $21$ are in all nine of these classes of vertical juxtapositions.
Atkinson \cite[Theorem 2.2]{Atkinson1999} showed how to compute the basis for the juxtaposition of two permutation classes, a method which can be adapted to Cayley permutations. Table \ref{tab: vertical juxtapositions bases} shows these bases for Cayley permutations.

\begin{table}[h]
    \begin{tabular}{c|ccc}
        $A \backslash B$ & $I$                            & $D$                            & $C$                            \\
        \hline
        $I$              & $\Av(11,321,2143,2413)$        & $\Av(11,132,312)$              & $\Av(122,132,212,221,312,321)$ \\
        $D$              & $\Av(11,213,231)$              & $\Av(11,123,3142,3412)$        & $\Av(122,123,212,213,221,231)$ \\
        $C$              & $\Av(112,121,211,213,231,321)$ & $\Av(112,121,123,132,211,312)$ & $\Av(123,132,213,231,312,321)$ \\
    \end{tabular}
    \caption{The bases for the nine classes of vertical juxtapositions $\mathcal{V}_{A,B}$.}
    \label{tab: vertical juxtapositions bases}
\end{table}

A \emph{vertical alternation} is a Cayley permutation $a_1b_1a_2b_2 \cdots a_nb_n$ with $b_i > a_j$ for all $1 \leq i \leq n$ and $1 \leq j \leq n$. There is a unique vertical alternation of size $2n$ for each of the nine classes of vertical juxtapositions that we introduced. These have the following useful property. 

\begin{lemma}
    \label{lem: alternation implies juxtaposition}
    For each $\mathcal{V} \in \{ \mathcal{V}_{I,I}, \mathcal{V}_{I,D}, \mathcal{V}_{I,C}, \mathcal{V}_{D,I}, \mathcal{V}_{D,D}, \mathcal{V}_{D,C}, \mathcal{V}_{C,I}, \mathcal{V}_{C,D}, \mathcal{V}_{C,C} \}$, the vertical alternation of size $2n$ in $\mathcal{V}$ contains every vertical juxtaposition in $\mathcal{V}$ of size up to $n$.
\end{lemma}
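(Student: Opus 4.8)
The plan is to exhibit, for an arbitrary vertical juxtaposition $\pi \in \mathcal{V}$ of size $m \leq n$, an explicit occurrence of $\pi$ inside the vertical alternation $\alpha = a_1 b_1 a_2 b_2 \cdots a_n b_n$ of size $2n$. First I would record the shape of $\alpha$: reading only the odd positions $a_1, \ldots, a_n$ recovers the length-$n$ sequence of the first type (namely $12\cdots n$ when $a=I$, $n(n-1)\cdots 1$ when $a=D$, and a constant word when $a=C$), while the even positions $b_1, \ldots, b_n$ recover the corresponding length-$n$ sequence of the second type, with every $b_i$ exceeding every $a_j$. This is exactly the data that makes $\alpha$ the unique vertical alternation in $\mathcal{V}$.

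Next I would decompose $\pi$. By the definition of a vertical juxtaposition, the entries of $\pi$ split by value into a \emph{bottom} part standardising to a sequence $\sigma$ of the first type and a \emph{top} part standardising to a sequence $\tau$ of the second type, with every top entry larger than every bottom entry. Recording, from left to right, whether each entry of $\pi$ is bottom or top produces a word $u$ over the two-letter alphabet $\{B,T\}$ of length $m \leq n$; the bottom entries form a type-$a$ sequence of length $p \leq n$ and the top entries a type-$b$ sequence of length $q \leq n$.

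The core step is to embed $u$ into the perfectly alternating $\{B,T\}$-pattern underlying $\alpha$, where the odd positions are $B$-slots and the even positions are $T$-slots. I would do this greedily, matching each letter of $u$ to the next available slot of the correct kind. Since the $B$-slots and the $T$-slots are each spaced two apart, an induction shows that matching the $i$-th letter never consumes a position beyond $2i$; as $m \leq n$, the whole of $u$ is matched within the first $2m \leq 2n$ positions, so the embedding always succeeds. By construction this selects the bottom entries of $\pi$ from an increasing set of odd positions and the top entries from an increasing set of even positions.

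Finally I would verify that this single embedding realises $\pi$ as a pattern, not merely its $B/T$ silhouette. Because bottom entries map to $B$-slots and top entries to $T$-slots, every image of a top entry exceeds every image of a bottom entry, matching the value split in $\pi$. Within the bottom, the chosen odd positions are read in increasing order, so their values form a subsequence of the length-$n$ type-$a$ sequence; since strictly increasing, strictly decreasing and constant sequences are each closed under taking subsequences in positional order, these values standardise exactly to $\sigma$, and symmetrically the chosen top values standardise to $\tau$. Hence the selected subword of $\alpha$ standardises to $\pi$. The one point needing care, and the main obstacle, is coordinating the interleaving constraint and the value constraints simultaneously; it dissolves precisely because the greedy matching automatically reads the chosen slots in increasing positional order, so the heredity of the $I$, $D$ and $C$ patterns supplies the value match for free.
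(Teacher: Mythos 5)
Your proof is correct and takes essentially the same approach as the paper's: the paper simply selects $a_i$ if the $i$-th entry of $\pi$ is a bottom entry and $b_i$ if it is a top entry, which is the canonical instance of your greedy embedding, and it rests on the same two facts you use (every $b_i$ exceeds every $a_j$, and strictly increasing, strictly decreasing and constant sequences are closed under taking subsequences). Your additional greedy bookkeeping and the bound of $2i$ on consumed positions are sound but not needed, since indexing entry $i$ directly to the $i$-th pair already works.
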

\begin{proof}
    Let $a_1b_1a_2b_2...a_nb_n$ be the vertical alternation of size $2n$ in $\mathcal{V}$. Take an arbitrary $\pi$ of size $n$ in $\mathcal{V}$ that is a vertical juxtaposition of $\sigma$ and $\tau$. If the $i^{th}$ value of $\pi$ is from $\sigma$ select $a_i$ or if it is from $\tau$ select $b_i$. This selection of $n$ numbers will standardise to $\pi$ as the $b_i$'s are all greater than the $a_i$'s. Therefore, the vertical alternation of size $2n$ in $\mathcal{V}$ contains every vertical juxtaposition in $\mathcal{V}$ of size up to $n$.
\end{proof}

Using Lemma \ref{lem: alternation implies juxtaposition} we will show in Theorem \ref{thm: no vertical juxtapositions iff subclass of SB(k)} that to find when a Cayley permutation class is slot-bounded we need only to find out if its basis contains a vertical juxtaposition from each of our nine classes. In order to prove this, Lemma \ref{lem: subsequence of monotonic sequence} and Lemma \ref{lem: sequence n^4 implies strict sequence n} are needed first.

\begin{lemma}
    \label{lem: subsequence of monotonic sequence}
    A nondecreasing sequence of size $m$ contains a subsequence of size at least $\ceil{\sqrt{m}}$ that is either strictly increasing or constant.
\end{lemma}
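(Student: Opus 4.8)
The plan is to exploit the rigid structure of a nondecreasing sequence: it breaks canonically into maximal blocks of equal consecutive values, and these two quantities—the number of blocks and the size of the largest block—directly control the two kinds of subsequence we are after. First I would write the sequence as a concatenation of its maximal constant runs, say with distinct values occurring with multiplicities $c_1, c_2, \ldots, c_k$, so that $c_1 + c_2 + \cdots + c_k = m$. Selecting one entry from each run produces a strictly increasing subsequence of length $k$ (the values are distinct and, reading left to right, strictly increasing because the sequence is nondecreasing), while the longest single run gives a constant subsequence of length $\max_i c_i$.

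It then remains to show that $\max\!\left(k,\ \max_i c_i\right) \ge \ceil{\sqrt{m}}$. I would argue by contradiction via a pigeonhole-style bound: suppose both $k \le \ceil{\sqrt{m}} - 1$ and $\max_i c_i \le \ceil{\sqrt{m}} - 1$. Then
\begin{equation*}
    m = \sum_{i=1}^{k} c_i \le k \cdot \max_i c_i \le \left(\ceil{\sqrt{m}} - 1\right)^2.
\end{equation*}
The only remaining point is the elementary inequality $\left(\ceil{\sqrt{m}} - 1\right)^2 < m$, which holds because $\ceil{\sqrt{m}} < \sqrt{m} + 1$ gives $\ceil{\sqrt{m}} - 1 < \sqrt{m}$, and squaring the nonnegative quantities preserves the strict inequality. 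This yields $m < m$, a contradiction, so at least one of $k$ or $\max_i c_i$ is at least $\ceil{\sqrt{m}}$, establishing the claim.

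This is essentially the Erd\H{o}s--Szekeres pigeonhole idea, but the nondecreasing hypothesis collapses the usual two-dimensional bookkeeping to the single one-dimensional statement $k \cdot \max_i c_i \ge m$, so there is no genuinely hard step. The only part requiring any care—and the one I would state explicitly rather than wave through—is the ceiling estimate $\left(\ceil{\sqrt{m}} - 1\right)^2 < m$, since it is the sole place where the bound $\ceil{\sqrt{m}}$ (as opposed to a weaker $\lfloor\sqrt{m}\rfloor$ or a non-integer threshold) is actually pinned down; I would verify it holds for every $m \ge 1$, covering in particular the case where $m$ is a perfect square.
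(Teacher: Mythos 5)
Your proposal is correct and is in essence the paper's own argument: both rest on the same pigeonhole dichotomy between the number of distinct values (yielding a strictly increasing subsequence) and the maximum multiplicity of a value (yielding a constant one). The only difference is presentational — your run decomposition with the single product bound $m \le k \cdot \max_i c_i$ and the ceiling estimate $\left(\ceil{\sqrt{m}} - 1\right)^2 < m$ handles all $m$ uniformly, whereas the paper splits into the cases $m = n^2$ and $n^2 < m < (n+1)^2$; your version is arguably the cleaner write-up of the same proof.
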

\begin{proof}
    First, consider the case when $m=n^2$ for some $n\in\mathbb{N}_1$.
    If there are at least $n$ distinct values then there would be a strictly increasing sequence of size at least $n$.
    If there were fewer than $n$ distinct values then there would be at least one value with at least $n$ occurrences in the sequence, forming a constant subsequence of size at least $n$.
    
    If $n^2<m<(n+1)^2$ then either there are more than $n$ distinct values, in which case there is a strictly increasing subsequence of size at least $n + 1$, or there is least one value with at least $n + 1$ occurrences, in which case there is a constant subsequence of size at least $n + 1$. Hence, a strictly increasing sequence of size $m$, with $n^2 < m < (n+1)^2$ contains either a strictly increasing sequence or a constant sequence of size at least $\ceil{\sqrt{m}}$.
\end{proof}

Lemma~\ref{lem: subsequence of monotonic sequence} is a generalisation of the Erd\H{o}s-Szekeres theorem~\cite{Erdos1935} which states that for $r, s \in \mathbb{N}_1$ any sequence of length $(r-1)(s-1)+1$ contains a subsequence of length $r$ that is nondecreasing or a subsequence of length $s$ that is nonincreasing.
Using this theorem we can prove Lemma~\ref{lem: sequence n^4 implies strict sequence n}.

\begin{lemma}
    \label{lem: sequence n^4 implies strict sequence n}
    Any sequence of size $n^4$ contains a subsequence of size $n$ that is either strictly increasing, strictly decreasing or constant.
\end{lemma}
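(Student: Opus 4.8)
The plan is to combine the Erd\H{o}s--Szekeres theorem with Lemma~\ref{lem: subsequence of monotonic sequence} in two stages: first extract a long monotone (that is, nondecreasing or nonincreasing) subsequence, and then refine it into a strictly monotone or constant one.

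First I would invoke the Erd\H{o}s--Szekeres theorem with $r = s = n^2$. This guarantees that any sequence of length $(n^2 - 1)(n^2 - 1) + 1 = n^4 - 2n^2 + 2$ contains either a nondecreasing subsequence of length $n^2$ or a nonincreasing subsequence of length $n^2$. Since $n \geq 1$ gives $n^4 \geq n^4 - 2n^2 + 2$, our sequence of size $n^4$ is long enough, so it contains a monotone subsequence of length $n^2$.

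Next I would feed this monotone subsequence into Lemma~\ref{lem: subsequence of monotonic sequence}. If the subsequence is nondecreasing, the lemma applied with $m = n^2$ yields a strictly increasing or constant subsequence of size at least $\ceil{\sqrt{n^2}} = n$. If instead the subsequence is nonincreasing, I would apply the mirror image of Lemma~\ref{lem: subsequence of monotonic sequence} --- obtained by reversing the order on values, which turns a nonincreasing sequence into a nondecreasing one and interchanges the roles of \emph{strictly increasing} and \emph{strictly decreasing} while fixing \emph{constant} --- to obtain a strictly decreasing or constant subsequence of size at least $n$. Either way we recover a subsequence of size $n$ that is strictly increasing, strictly decreasing, or constant, as required.

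The only points to check are arithmetic and bookkeeping rather than anything deep. I expect the main (minor) obstacle to be confirming that the length threshold works out: one must verify $n^4 \geq (n^2 - 1)^2 + 1$ for all $n \geq 1$, and check that $\ceil{\sqrt{n^2}} = n$ so that the second stage really delivers size $n$ and not something smaller. The other point worth stating explicitly is that Lemma~\ref{lem: subsequence of monotonic sequence} is phrased only for nondecreasing sequences, so the nonincreasing case genuinely needs the symmetric version, which follows immediately from the value-reversal symmetry.
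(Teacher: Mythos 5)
Your proposal is correct and follows essentially the same route as the paper: Erd\H{o}s--Szekeres to extract a monotone subsequence of length $n^2$, then Lemma~\ref{lem: subsequence of monotonic sequence} (with the value-reversal symmetry for the nonincreasing case) to refine it to a strictly monotone or constant subsequence of size $n$. Your explicit verification of the length threshold and the symmetric version of the lemma only makes precise what the paper's shorter proof leaves implicit.
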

\begin{proof}
    By the Erd\H{o}s-Szekeres theorem~\cite{Erdos1935}, any sequence of size $n^4$ contains a subsequence of size $n^2$ that is nondecreasing or nonincreasing, and by Lemma \ref{lem: subsequence of monotonic sequence} any nondecreasing (or nonincreasing) sequence of size $n^2$ contains a subsequence of size $n$ that is either strictly increasing (or strictly decreasing) or constant.
\end{proof}

Using these lemmas, we generalise a proof on permutations by Albert, Linton and Ru\v{s}kuc \cite[Proposition 13]{Albert2005} to prove a condition in Theorem \ref{thm: no vertical juxtapositions iff subclass of SB(k)} for a Cayley permutation class to be a subclass of $\SB(k)$. 
As the results in Section~\ref{sec:regular-algo} show that a finitely based Cayley permutation class is regular if and only if it is slot-bounded, Theorem \ref{thm: no vertical juxtapositions iff subclass of SB(k)} gives a linear time algorithm in the length of the basis elements for checking if a finitely based Cayley permuation class is regular.

\begin{theorem}
    \label{thm: no vertical juxtapositions iff subclass of SB(k)}
    A Cayley permutation class $\Av(B)$ is a subclass of $\SB(k)$ for some $k$ if and only if $B$ contains a Cayley permutation from each of the nine classes $\mathcal{V}_{I, I}$, $\mathcal{V}_{I, D}$, $\mathcal{V}_{I, C}$, $\mathcal{V}_{D, I}$, $\mathcal{V}_{D, D}$, $\mathcal{V}_{D, C}$, $\mathcal{V}_{C, I}$, $\mathcal{V}_{C, D}$, and $\mathcal{V}_{C, C}$ of vertical juxtapositions.
\end{theorem}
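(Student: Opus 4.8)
The plan is to prove both directions by contraposition, with Lemma~\ref{lem: alternation implies juxtaposition} as the bridge: the size-$2n$ vertical alternation of $\mathcal{V}_{a,b}$ contains every size-$n$ vertical juxtaposition in $\mathcal{V}_{a,b}$. The informal principle is that a configuration with many slots in the evolution of $\pi$ encodes a large vertical alternation sitting inside $\pi$, and conversely that a large alternation can only be built through configurations with many slots.

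For the direction ``subclass of $\SB(k)$ $\Rightarrow$ $B$ meets every $\mathcal{V}_{a,b}$'' I would take the contrapositive and suppose $B$ contains no element of some $\mathcal{V}_{a,b}$. Because $\mathcal{V}_{a,b}$ is downward closed, a member of it can never contain a basis element (such an element would itself lie in $\mathcal{V}_{a,b}$), so $\mathcal{V}_{a,b}\subseteq\Av(B)$. I would then observe that the size-$2n$ alternation $a_1b_1\cdots a_nb_n$ of $\mathcal{V}_{a,b}$ forces at least $n$ slots: every $b_i$ exceeds every $a_j$, so in the value-increasing evolution all the low entries are placed before any high entry, and the configuration reached at that instant must carry a distinct slot in each of the $n$ gaps the high entries will occupy. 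Thus this alternation lies outside $\SB(n-1)$, and since $n$ is arbitrary, $\Av(B)$ --- which contains all of $\mathcal{V}_{a,b}$ --- is not contained in any $\SB(k)$.

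For the converse I would fix, for each of the nine pairs, a basis element $\beta_{a,b}\in B\cap\mathcal{V}_{a,b}$, let $n$ bound their sizes, and prove the contrapositive: there is a $k$ such that any $\pi$ whose evolution reaches a configuration $c$ with more than $k$ slots contains some $\beta_{a,b}$, giving $\Av(B)\subseteq\SB(k)$. Choosing one separating entry in each gap of $c$ produces a long word of low entries, all at most the current maximum $M$; Lemma~\ref{lem: sequence n^4 implies strict sequence n} extracts from it a long subsequence of a single type $a\in\{I,D,C\}$. The next slot to the right of each chosen low is filled by $\pi$ with a high entry (every future insertion is a new or repeated maximum, hence $\geq M$) placed between that low and the following one. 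Applying Lemma~\ref{lem: sequence n^4 implies strict sequence n} to these fillers isolates a sub-selection whose highs have a single type $b$, the matching lows still having type $a$; read left to right they interleave as $v_1h_1v_2h_2\cdots$, a vertical alternation of size $2n$ in $\mathcal{V}_{a,b}$, which by Lemma~\ref{lem: alternation implies juxtaposition} contains $\beta_{a,b}$. Unwinding the two nested extractions fixes the value of $k$.

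The hard part will be confirming that this interleaving really is a vertical alternation, namely that every high entry is strictly above every low entry and not merely $\geq M\geq$ it. A tie can only occur when a low and a high both equal $M$, so the issue is the boundary value. If the lows have a strict type, at most one equals $M$ and may be dropped, after which all lows lie strictly below $M$ and hence below every filler; likewise if the constant lows sit below $M$. The genuinely delicate case is constant lows all equal to $M$, where I must exhibit fillers strictly exceeding $M$. Here I would invoke the leftmost-first rule for equal values: a repeated maximum can never be inserted to the left of an existing maximum, so any slot having an occurrence of $M$ to its right is necessarily first filled by a value strictly larger than $M$. All but at most one of the relevant slots meet this condition, so discarding a single low entry leaves enough strictly-larger fillers to run the second extraction. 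Managing this threshold bookkeeping, rather than the monotone-subsequence extractions, is the real crux.
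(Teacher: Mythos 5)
Your proposal is correct and follows the same skeleton as the paper's proof: for the forward direction you argue by contraposition, but the witnesses are the same as the paper's --- the paper exhibits the nine words $b_0a_1b_1\cdots a_kb_k$, which are derivations of $~\diamond~ a_1 ~\diamond~ \cdots ~\diamond~ a_k ~\diamond~$ and hence outside $\SB(k)$, while you observe that the size-$2n$ alternation of the missed class $\mathcal{V}_{a,b}$ passes through a configuration with $n$ slots; and for the converse you run the same two nested extractions via Lemma~\ref{lem: sequence n^4 implies strict sequence n} (lows first, then the adjacent slot-fillers) and finish with Lemma~\ref{lem: alternation implies juxtaposition}, exactly as the paper does with its generous choice $k=n^{16}$. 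The one substantive difference is your tie analysis, and it works in your favour: the paper simply asserts that the extracted pattern $a_{i_{j_1}-1}b_{i_{j_1}}\cdots a_{i_{j_n}-1}b_{i_{j_n}}$ \emph{is} a vertical alternation, but values inserted after a configuration are only guaranteed to be $\geq$ the prefix maximum $M$, not strictly greater, since a slot lying to the right of the rightmost occurrence of $M$ may be filled by a repeated maximum. For instance $32212$ is a minimal Cayley permutation outside $\SB(2)$ (a size-$5$ derivation of $~\diamond~2~\diamond~1~\diamond~$), and the extraction can produce $2212$, which is not a vertical alternation. Your repair is the right one: when the lows are strictly monotone at most one of them equals $M$ and can be discarded, and when the lows are constantly equal to $M$ the leftmost-first rule for repeated values forces every slot with an occurrence of $M$ to its right to be filled by a value strictly exceeding $M$, so again discarding a single pair suffices; the loss of one pair costs only a constant adjustment to $k$, which your (and the paper's) wasteful choice of $k$ absorbs. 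So your argument is essentially the paper's, with a boundary case made explicit that the paper's own write-up glosses over.
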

\begin{proof} 
    Suppose that $\Av(B) \subseteq$ $\SB(k)$. 
    The nine Cayley permutations of the form $b_0a_1b_1...a_kb_k$ where $a_1...a_k$ and $b_0b_1...b_k$ are each strictly increasing, strictly decreasing or constant are examples from each of the nine types of vertical juxtapositions. They are also not in $\Av(B)$ as they are all derivations of configurations of the form $~\diamond~ a_1 ~\diamond~ a_2 ~\diamond~ \cdots ~\diamond~ a_k ~\diamond~$. As each of these nine forms are downwards closed, $B$ must contain a Cayley permutation from each of the nine classes of vertical juxtapositions.
    
    Suppose $\Av(B)$ is a Cayley permutation class with $B$ containing Cayley permutations from each of the nine classes of vertical juxtapositions. Suppose the longest vertical juxtaposition in $B$ has size $n$. We will show that there exists some integer $k$ such that every Cayley permutation in the basis of $\SB(k)$ contains a vertical juxtaposition that is contained in $B$, so $\Av(B) \subseteq \SB(k)$ for this $k$. 
    
    Let $k=n^{16}$ and $\sigma = b_1a_1b_2a_2b_3...b_ka_kb_{k+1}$ be a basis element of $\SB(k)$. Although a smaller $k$ may also work, we will show that $k = n^{16}$ is sufficient. 
    The sequence of $a$'s has size $k = n^{16}$, so by Lemma \ref{lem: sequence n^4 implies strict sequence n} there exists a subsequence of size $n^4$ which is either strictly increasing, strictly decreasing or constant. We will denote this as $a_{i_1} a_{i_2} \ldots a_{i_{n^4}}$.
    
    Next, we choose the $n^4$ values from the $b$'s that are adjacent to the $a_i$'s so that they form the pattern $a_{i_1} b_{i_1+1} a_{i_2} b_{i_2+1}\ldots a_{i_{n^4}} b_{i_{n^4+1}}$. As before, among these $b_i$'s there exists a subsequence of size $n$ that is strictly increasing, strictly decreasing, or constant by Lemma \ref{lem: sequence n^4 implies strict sequence n}. 
    We will denote these $b_{i_{j_1}} b_{i_{j_2}} \ldots b_{i_{j_n}}$.
    Taking a subset of the $a_i$'s that remain adjacent to the $b_{i_j}$'s we create the pattern $a_{i_{j_1}-1} b_{i_{j_1}} a_{i_{j_2}-1} b_{i_{j_2}} \ldots a_{i_{j_n}-1} b_{i_{j_n}}$.
    
    This sequence is a vertical alternation in one of the nine classes of vertical juxtapositions of length $2n$, so by Lemma \ref{lem: alternation implies juxtaposition} the sequence contains every vertical juxtaposition from the same class of size up to $n$, specifically a vertical juxtaposition that is in $B$. Therefore, this basis element $\sigma$ of $\SB(k)$ with $k = n^{16}$ contains a basis element from $B$. As $\sigma$ was arbitrary, every basis element of $\SB(k)$ contains an element from $B$, so none of them are in $\Av(B)$, hence $\Av(B) \subseteq$ $\SB(k)$.
\end{proof}

In Section~\ref{sec:regular-algo} we will give an algorithm for computing a DFA that accepts the regular language for the set of vertical insertion encodings of any finitely based slot-bounded Cayley permutation class. Hence, if a finitely based Cayley permutation class avoids arbitrarily long vertical juxtapositions then we can compute a DFA for it.

\section{Computing vertical insertion encodings}
\label{sec:regular-algo}
Vatter \cite{Vatter2009} gave a method for constructing a DFA for the insertion encodings of slot-bounded permutation classes. The method from that paper can be extended to slot-bounded Cayley permutation classes. However, we present an alternative approach that is more efficient for permutations and can be applied to Cayley permutations. Our definitions and methods follow those introduced by Albert, Bean, Claesson, Nadeau, Pantone, and Ulfarsson \cite{combexp} for permutations, but here we will adapt them to Cayley permutations. 

The heart of our approach is essentially the same as Vatter's method: consider an infinite accepting automata whose states are configurations with the start state $~\diamond~$, accept states being Cayley permutations, and transitions being determined by the possible insertions. Vatter outlines a method for determining when a letter can be removed from a configuration without changing the transitions, and calls such a letter \emph{insertion-encoding-reducible} (or \emph{IE-reducible}). From their results, in a slot-bounded permutation class any sufficiently long configuration will have an IE-reducible letter, which in turns tells us that this method will result in a DFA.

To implement this, one needs to verify that any permutation that is a derivation of the configuration that is not in the permutation class is still not in the permutation class after removing the letter. This is often a computationally expensive check.
Vatter showed that it is possible to determine if a letter in a configuration is IE-reducible by checking finitely many derivations of the configuration.

We will instead represent configurations as \emph{tilings}. Tilings give a convenient and computer implementable way of representing sets of Cayley permutations and the strategies needed to construct the DFA. 
To check if a letter is IE-reducible, Vatter's method requires the generation of permutations up to a fixed size with some checks on whether the permutations contain or avoid the basis of the class. 
Tilings keep track of the occurrences of basis elements as the configurations are expanded, so checks required to determine if a letter is IE-reducible are already done. In fact, by moving to tilings every letter will be IE-reducible, and therefore more states in our DFA are shown to be equivalent. For these reasons, our approach appears to produce significantly fewer states and require less computation time than Vatter's method, which we observed in the computational experiments that we ran. 

In Table~\ref{tab:states}, we compare the number of states in the DFAs found by our implementation of Vatter's method and the tilings approach for the classes $\Av(B)$ for subsets $B$ of the Cayley permutations of size $3$. It appears to show that the tilings approach results in a DFA with significantly fewer states.

\begin{table}[h]
    \centering
    \begin{tabular}{|c|ccc|ccc|}
        \hline
              & \multicolumn{3}{c|}{Vatter's method} & \multicolumn{3}{c|}{The tilings approach}                             \\
        \hline
        $|B|$ & Min                                  & Average                                   & Max & Min & Average & Max \\
        \hline
        3     & 33                                   & 180.7                                     & 843 & 5   & 17.9    & 38  \\
        4     & 12                                   & 124.1                                     & 616 & 3   & 15.6    & 35  \\
        5     & 9                                    & 92.2                                      & 434 & 4   & 13.7    & 28  \\
        6     & 13                                   & 70.7                                      & 330 & 4   & 12.1    & 23  \\
        \hline
    \end{tabular}
    \caption{Comparison of the number of states of the DFA found by Vatter's method and the tilings approach. This is for the classes $\Av(B)$ for subsets $B$ of the Cayley permutations of size $3$.}
    \label{tab:states}
\end{table}

We will now outline some of the details of this implementation in the remainder of this section. Much of this is a direct adaptation of the work of Albert \textit{et al.} \cite{combexp} to the context of Cayley permutations.
The material in Section 5.1 is technical with details that are independent of the rest of the paper so the reader may skip to Section 5.2 on their first pass through this paper if they wish.

\subsection{Tilings}
\label{sec:tilings}

A Cayley permutation $\pi$ can be represented on a grid such that each index of $\pi$ is given a position in a cell on the grid. For example, the Cayley permutation 35125224 is shown in Figure~\ref{fig:gridded cperm} with positions $((0,1), (0,1), (0,0), (0,0), (1,1), (1,0), (1,0), (1,1))$.

\begin{figure}[h]
    \centering
    \resizebox{0.2\textwidth}{!}{%
        \begin{tikzpicture}
            \draw[step=3cm,gray,very thin] (0,0) grid (6,6);
            \foreach \x/\y in {0.5/3.5, 1.125/5, 1.75/1, 2.375/2, 3.5/5, 4/2, 4.75/2, 5.5/3.5}
            \fill[black] (\x,\y) circle (0.15cm);
        \end{tikzpicture}
    }%
    \caption{The Cayley permutation $35125224$ on a grid with positions $((0,1), (0,1), (0,0), (0,0), (1,1), (1,0), (1,0), (1,1))$.}
    \label{fig:gridded cperm}
\end{figure}

More formally, a Cayley permutation $\pi$ of size $n$ is given an $n$-tuple of cells $(c_1, c_2, \ldots, c_n)$ with $c_i \in \mathbb{N}_0 \times \mathbb{N}_0$, called the $\emph{positions}$. This represents a gridding of the points of the Cayley permutation on the positive quadrant; the point $(i, \pi(i))$ with $c_i = (x, y)$ is in the square $[x, x+1) \times [y, y+1)$. 

We only consider tuples of positions for Cayley permutations which are consistent with the Cayley permutation, i.e., a Cayley permutation ${\pi(1)\pi(2)\cdots\pi(n)}$ and positions ${((x_1, y_1), (x_2, y_2), \ldots, (x_n, y_n))}$ are consistent if for every pair of indices $i$ and $j$ with $i < j$, we have
\begin{itemize}
    \item $x_i \leq x_j$,
    \item if $\pi(i) < \pi(j)$ then $y_i \leq y_j$,
    \item if $\pi(i) > \pi(j)$ then $y_i \geq y_j$, and
    \item if $\pi(i) = \pi(j)$ then $y_i = y_j$.
\end{itemize}
The example shown in Figure~\ref{fig:gridded cperm} is consistent but the Cayley permutation $212$ with positions $((0, 1), (1, 0), (1,0))$ is not consistent, for example.

We define a \emph{gridded Cayley permutation} to be a pair $(\pi, P)$ of a Cayley permutation $\pi$, called the \emph{underlying pattern}, with a tuple of positions $P$ such that $\pi$ and $P$ are consistent.
For example, $$(35125224, ((0,1), (0,1), (0,0), (0,0), (1,1), (1,0), (1,0), (1,1)))$$ is a gridded Cayley permutation shown in Figure \ref{fig:gridded cperm}, but $(212, ((0, 1), (1, 0), (1,0)))$ is not consistent, so is not a gridded Cayley permutation. 
For clarity when there are multiple gridded Cayley permutations on the same grid, we will represent each gridded Cayley permutation with lines joining it's points.
We define the \emph{size} of a gridded Cayley permutation $(\pi, P)$ to be the size of its underlying pattern $\pi$. As a shorthand, when all of the cells in $P$ are the same, say $c$, then we will write $(\pi, c)$.

Let $\mathcal{GC}$ be the set of all gridded Cayley permutations, and $\mathcal{GC}^{(t, u)}$ be the set of gridded Cayley permutations with points in the region $[0, t] \times [0, u]$. A gridded Cayley permutation $(\pi, (c_1, \ldots, c_n))$ \emph{contains} a gridded Cayley permutation $(\sigma, (d_1, \ldots, d_k))$ if there is a subsequence $\pi(i_1)\pi(i_2)\cdots\pi(i_k)$ of $\pi$ that standardises to $\sigma$, and the position $c_{i_j} = d_j$ for all $1 \leq j \leq k$.
As for sets of Cayley permutations, we say that a gridded Cayley permutation avoids a set of gridded Cayley permutations if it avoids every gridded Cayley permutation in the set. Otherwise, we say the gridded Cayley permutation contains the set of gridded Cayley permutations.
In Figure~\ref{fig:gridded cperm avoidance}, the points circled in red show an occurrence of the gridded Cayley permutation $(2311, ((0,1), (0,1), (1,0), (1,0)))$ in  $(35125224, ((0,1), (0,1), (0,0), (0,0), (1,1), (1,0), (1,0), (1,1)))$.

\begin{figure}[h]
    \centering
    \resizebox{0.2\textwidth}{!}{%
        \begin{tikzpicture}
            \draw[step=3cm,gray,very thin] (0,0) grid (6,6);
            \foreach \x/\y in {0.5/3.5, 1.125/5, 1.75/1, 2.375/2, 3.5/5, 4/2, 4.75/2, 5.5/3.5}
            \fill[black] (\x,\y) circle (0.15cm);
            
            \foreach \x/\y in {0.5/3.5, 1.125/5, 4/2, 4.75/2}
            \draw[color=red] (\x,\y) circle (0.25cm);
        \end{tikzpicture}
    }%
    \caption{An occurrence of the gridded Cayley permutation $(2311, ((0,1), (0,1), (1,0), (1,0)))$ in  $(35125224, ((0,1), (0,1), (0,0), (0,0), (1,1), (1,0), (1,0), (1,1)))$.}
    \label{fig:gridded cperm avoidance}
\end{figure}

A \emph{(Cayley) tiling} is a triple $\mathcal{T} = ((t, u), \mathcal{O}, \mathcal{R})$ where $(t, u) \in \mathbb{N}_0 \times \mathbb{N}_0$, $\mathcal{O} \subseteq \mathcal{GC}^{(t, u)}$ is a set of gridded Cayley permutations called \emph{obstructions}, and $\mathcal{R} = \{\mathcal{R}_1, \ldots, \mathcal{R}_k\}$ is a set of sets of gridded Cayley permutations called \emph{requirements}. Each $\mathcal{R}_i$ is called a \emph{requirement list}. The tiling $\mathcal{T}$ represents the set $\Grid(\mathcal{T})$ of gridded Cayley permutations in $\mathcal{GC}^{(t, u)}$ that avoid $\mathcal{O}$ and contain each $\mathcal{R}_i$.

For a Cayley permutation class $\Av(B)$, the set of derivations from the configuration $~\diamond~$ is the same as the set of non-empty Cayley permutations in $\Av(B)$. There exists a size-preserving bijection between the set $\Av(B)$ and the gridded Cayley permutations represented by the $1 \times 1$ tiling that avoids each pattern in $B$ in the cell $(0,0)$ while containing a point in cell $(0, 0)$, as stated in Proposition~\ref{prop: tilings bijection to slot}. 

\begin{proposition}
    \label{prop: tilings bijection to slot}
    For a Cayley permutation class $\Av(B)$, the set of derivations $D_{\Av(B)}(~\diamond~)$ is in bijection with the gridded Cayley permutations represented by the tiling $$((1, 1), \{(\pi, (0, 0)) \mid \pi \in B\}, \{\{(1, (0, 0))\}\}).$$
\end{proposition}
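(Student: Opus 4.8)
The plan is to exhibit an explicit bijection between the derivations $D_{\Av(B)}(\diamond)$ and $\Grid(\mathcal{T})$, where $\mathcal{T}$ is the stated $1\times 1$ tiling, and then check that it is well-defined in both directions, inverse to itself, and size-preserving. The key observation, noted in the excerpt just before the statement, is that the derivations of the empty configuration $\diamond$ are exactly the non-empty Cayley permutations in $\Av(B)$: starting from $\diamond$, every evolution fills the single slot and must end at some Cayley permutation avoiding $B$, and conversely every non-empty $\pi \in \Av(B)$ arises this way. So the real content is to identify this set with the gridded objects of the tiling.

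\textbf{Step 1 (the map).} Given a non-empty $\pi \in \Av(B)$, I send it to the gridded Cayley permutation $(\pi, (0,0))$ in which every point is placed in the single cell $(0,0)$; here I use the shorthand $(\pi, c)$ introduced earlier for the case where all positions equal $c$. I first check this is a legitimate gridded Cayley permutation, i.e.\ that $\pi$ and the all-$(0,0)$ position tuple are \emph{consistent}: since every $x_i = 0$ and every $y_i = 0$, all four consistency inequalities ($x_i \le x_j$, and the three conditions relating $y_i, y_j$ to the relative order of $\pi(i), \pi(j)$) hold trivially. Hence $(\pi, (0,0)) \in \mathcal{GC}^{(1,1)}$.

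\textbf{Step 2 (landing in $\Grid(\mathcal{T})$).} I must verify $(\pi,(0,0))$ avoids the obstruction set $\mathcal{O} = \{(p,(0,0)) \mid p \in B\}$ and contains the requirement list $\mathcal{R}_1 = \{(1,(0,0))\}$. For avoidance: a gridded occurrence of $(p,(0,0))$ in $(\pi,(0,0))$ is, by the definition of gridded containment, precisely a subsequence of $\pi$ standardising to $p$ with all positions equal to $(0,0)$; since all positions in $(\pi,(0,0))$ are already $(0,0)$, this is just an ordinary occurrence of $p$ in $\pi$. As $\pi \in \Av(B)$ avoids every $p \in B$, there is no such occurrence, so the obstructions are avoided. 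For the requirement: $\pi$ is non-empty, so it has at least one point, which sits in cell $(0,0)$ and gives an occurrence of $(1,(0,0))$; thus $\mathcal{R}_1$ is contained. Hence the image lies in $\Grid(\mathcal{T})$.

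\textbf{Step 3 (inverse and bookkeeping).} In the other direction, every element of $\Grid(\mathcal{T})$ has the form $(\sigma, P)$ with all positions forced into the one available cell $(0,0)$, so $P$ is the constant tuple and the object is determined by its underlying pattern $\sigma$; the obstruction-avoidance condition translates (as in Step 2) to $\sigma$ avoiding $B$, and the requirement forces $\sigma$ non-empty. Sending $(\sigma,(0,0)) \mapsto \sigma$ is then a two-sided inverse to the map of Step 1, and since the size of a gridded Cayley permutation is defined to be the size of its underlying pattern, the bijection is size-preserving. \textbf{The main point to get right} is simply that on a single-cell grid gridded containment collapses to ordinary Cayley-permutation containment, so no genuine difficulty arises; the only care needed is to state the trivial consistency check and the non-emptiness/requirement correspondence cleanly rather than to overcome a real obstacle.
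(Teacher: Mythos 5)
Your proof is correct and matches the paper's treatment: the paper states Proposition~\ref{prop: tilings bijection to slot} without any proof, regarding it as immediate from the definitions, and your argument supplies exactly the routine verification left implicit --- on a $1 \times 1$ tiling consistency is trivial, gridded containment collapses to ordinary containment (so the obstructions encode avoidance of $B$) and the point requirement encodes non-emptiness. You also correctly identify $D_{\Av(B)}(\diamond)$ with the non-empty Cayley permutations in $\Av(B)$, which is precisely the identification the paper makes in the sentence preceding the proposition.
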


We usually represent obstructions in red with circles and requirements in blue with squares, as in Figure~\ref{fig: single slot tiling Av(231, 321)} which shows the tiling representing the set of derivations from the configuration $~\diamond~$ in the class $\Av(11, 231, 321)$. Every basis element is represented as an obstruction in the cell $(0,0)$ and the requirement represents a slot.

\begin{figure}[h]
    \centering
    \begin{tikzpicture}
        \node (simple) at (5,1) {
        \tiling{2.0}{1}{1}{}%
        {%
        {3/{(0.45, 0.8), (0.65, 0.6), (0.85, 0.4)}}, {3/{(0.2, 0.25), (0.35, 0.4), (0.5, 0.1)}}, {2/{(0.6, 0.25), (0.8, 0.25)}}}
        {%
        {1/{(0.5, 0.5)}}%
        }};
        
    \end{tikzpicture}
    \caption{The tiling representing the set of derivations from the configuration $~\diamond~$ in the class $\Av(11, 231, 321)$.}
    \label{fig: single slot tiling Av(231, 321)}
\end{figure}

Every configuration can also be represented as a tiling.
We effectively "draw" a configuration on a grid where each placed point in the configuration is represented by a cell in the tiling, point requirements are used to represent slots, and all other cells contain point obstructions to ensure that the order of points being placed in the tiling is consistent with the vertical insertion encoding. This creates an \emph{intermediate tiling} representing the configuration. On top of this we add obstructions representing the basis of the class to ensure that the gridded Cayley permutations represented by the tiling are exactly the derivations from the configuration that are in the class.

For a configuration with $n$ points already placed with $m$ different values and $k$ slots, we create a size $(n+k) \times (m+1)$ tiling. 
Each placed point with value $v$ at index $i$ in the configuration is represented by a \emph{point cell} in the cell $(i, v)$. This consists of the following.
\begin{itemize}
    \item A requirement list containing the point requirement $\{(1, (i, v))\}$.
    \item Point obstructions in every other cell in column $i$ to ensure that there are no other values in that column. That is, for every $u \in [m+1]$ with $u \neq v$, add the obstruction $(1, (i, u))$.
    \item The obstructions with underlying Cayley permutations $12$ and $21$ and positions in cells $(x, v)$ for all $x \in [n+k]$ to ensure that there can be no other values in that row.
    \item The obstruction $(11, (i, v))$ to ensure that there is exactly one point in the point cell.
    \item For each cell $(x, y)$ such that $x < i$ and $y = v$, if $(x, y)$ is not also a point cell then add the obstruction $(1, (x, y))$. This ensures that no occurrences of the same value can appear to the left of the placed point, following the rules of vertical insertion encoding.
\end{itemize}
Each slot at index $i$ in the configuration is represented by a list requirement $\mathcal{R}$ containing the requirements $(1, (i, y))$ for each $y \in [m+1]$ such that $(i, y)$ does not contain a point obstruction. If $i$ is to the left of the largest, rightmost point which has been placed in the configuration, then this will be a size 1 list as the only option for this slot is to insert a new maximum value. Otherwise, it will be a size 2 list, representing the options of inserting a new maximum value or a repeated value.
The intermediate tiling for the configuration $~2~\diamond~2~1~\diamond~$ is shown in Figure~\ref{fig:config as tiling}.

\begin{figure}[h]
    \centering 
    \centering   
    \resizebox{0.35\textwidth}{!}{%
        \begin{tikzpicture}
            \node (example tiling) at (1,1) {
            \tiling{2.0}{5}{3}{}%
            {%
            {1/{(0.5, 0.5)}}, {1/{(1.5, 0.5)}}, {1/{(2.5, 0.5)}}, {1/{(4.5, 0.5)}}, {1/{(0.5, 2.5)}}, {1/{(1.5, 1.5)}}, {1/{(3.5, 1.5)}}, {1/{(2.5, 2.5)}}, {1/{(3.5, 2.5)}}, {2/{(0.2, 1.4), (0.4, 1.2)}}, {2/{(0.6, 1.2), (0.8, 1.4)}}, 2/{(0.35, 1.7), (0.65, 1.7)}, {2/{(2.2, 1.4), (2.4, 1.2)}}, {2/{(2.6, 1.2), (2.8, 1.4)}}, 2/{(2.35, 1.65), (2.65, 1.65)}, {2/{(3.2, 0.4), (3.4, 0.2)}}, {2/{(3.6, 0.2), (3.8, 0.4)}}, 2/{(3.35, 0.7), (3.65, 0.7)},
            {2/{(0.8, 1.6), (4.2, 1.9)}}, {2/{(0.8, 1.9), (4.3, 1.6)}}, 2/{(0.8, 1.1), (2.2, 1.3)}, 2/{(0.8, 1.3), (2.2, 1.1)}, 2/{(2.8, 1.1), (4.2, 1.3)}, 2/{(2.8, 1.3), (4.2, 1.1)}, {2/{(4.2, 1.4), (4.4, 1.2)}}, {2/{(4.6, 1.2), (4.8, 1.4)}}%
            }%
            {{1/{(1.5,2.5)}}, {1/{(4.5,1.5)}}, {1/{(4.5,2.5)}}, {1/{(0.5,1.5)}}, {1/{(2.5,1.5)}}, {1/{(3.5,0.5)}}}};
        \end{tikzpicture}
    }%
    \caption{The intermediate tiling representing the configuration $~2~\diamond~2~1~\diamond~$.}
    \label{fig:config as tiling}
\end{figure}

Formally, this tiling is $\mathcal{T} = ((2, 5), \mathcal{O}, \{\mathcal{R}_1, \mathcal{R}_2, \mathcal{R}_3, \mathcal{R}_4, \mathcal{R}_5\})$ where
\begin{itemize}
    \item $\mathcal{O} = \{(1, (0, 0)),$ $(1, (1, 0)),$ $(1, (2, 0)),$ $(1, (4, 0)),$ $(1, (1, 1)),$ $(1, (3, 1)),$ $(1, (0, 2)),$ $(1, (2, 2)),$ $(1, (3, 2))$,
          $(11, (0, 1))$, $(12, (0, 1))$, $(21, (0, 1))$, 
          $(11, (2, 1))$, $(12, (2, 1))$, $(21, (2, 1))$,
          $(11, (3, 0))$, $(12, (3, 0))$, $(21, (3, 0))$,
          $(12, ((0, 1), (2, 1)))$, $(21, ((0, 1), (2, 1)))$,
          $(12, ((0, 1), (4, 1)))$, \\ $(21, ((0, 1), (4, 1)))$, 
          $(12, ((2, 1), (4, 1)))$, $(21, ((2, 1), (4, 1)))$,
          $(12, (4, 1))$, $(21, (4, 1))$, $\}$
    \item $\mathcal{R}_1 = \{(1, (0, 1))\}, $
    \item $\mathcal{R}_2 = \{(1, (2, 1))\}, $
    \item $\mathcal{R}_3 = \{(1, (3, 0))\}, $
    \item $\mathcal{R}_4 = \{(1, (1, 2))\}, $ and
    \item $\mathcal{R}_5 =  \{(1, (4, 1)), (1, (4, 2))\}$.
\end{itemize}

To reduce the notation in tilings, point cells will be represented by a black dot replacing the obstructions and requirement list in that cell, as shown in cells $(0, 1)$, $(2,1)$ and $(3,0)$ in Figure~\ref{fig:config as tiling, point cells}. Additionally, the size 2 obstructions in rows containing a point cell will be omitted for clarity and represented by a $*$ beside that row in the tiling, also shown in Figure~\ref{fig:config as tiling, point cells}. These rows are called \emph{point rows}. Note that apart from the top row of a tiling every other row will be a point row.

\begin{figure}[h]
    \centering 
    \centering   
    \begin{tikzpicture}
        \node (example tiling) at (1,1) {
        \tiling{1.0}{5}{3}{0/1, 2/1, 3/0}%
        {%
        {1/{(0.5, 0.5)}}, {1/{(1.5, 0.5)}}, {1/{(2.5, 0.5)}}, {1/{(4.5, 0.5)}}, {1/{(0.5, 2.5)}}, {1/{(1.5, 1.5)}}, {1/{(3.5, 1.5)}}, {1/{(2.5, 2.5)}}, {1/{(3.5, 2.5)}}%
        }
        {1/{(1.5,2.5)}, 1/{(4.5,1.5)}, 1/{(4.5,2.5)}}};
        \node at (3.7, 1) {$*$};
        \node at (3.7, 0) {$*$};
    \end{tikzpicture}
    \caption{Intermediate tiling representing the configuration $~2~\diamond~2~1~\diamond~$ with point cells in black and point rows denoted $*$.}
    \label{fig:config as tiling, point cells}
\end{figure}

From this intermediate tiling, for a class $\Av(B)$ we add 
all of the gridded Cayley permutations in $\mathcal{GC}^{(5, 3)}$ with an underlying Cayley permutation in $B$ as obstructions.
Albert \textit{et al.} \cite{combexp} outlined how effective keeping track of occurrences of basis patterns can be in the context of permutation classes. 
We also simplify tilings where possible to create equivalent tilings.
We say that two tilings $\mathcal{T}_1$ and $\mathcal{T}_2$ are \emph{equivalent}, denoted $\mathcal{T}_1 \cong \mathcal{T}_2$ if there exists a size preserving bijection between the gridded Cayley permutations in $\Grid(\mathcal{T}_1)$ and in $\Grid(\mathcal{T}_2)$. A tiling $\mathcal{T}_1$ can often be simplified to an equivalent tiling $\mathcal{T}_2$ with fewer obstructions and requirements by the following.
\begin{enumerate}
    \item If an obstruction contains another obstruction, remove the larger obstruction. Avoiding the smaller one is sufficient to avoid the larger one.
    \item If an obstruction has a position in a point cell, then that position and index in the underlying pattern can be removed from the obstruction.
    \item If a row or column in a tiling contains point obstructions in every cell, then that row or column can be removed from the tiling.
    \item If there exists a point obstruction $g = (1, (x, y))$ and there exists a requirement list $\mathcal{R}$ containing $g$, then $g$ is removed from $\mathcal{R}$. The obstruction implies that a point in that cell would create an occurrence of a basis pattern and a different requirement in the list must be satisfied. If there are no other requirements in $\mathcal{R}$, then the tiling represents a configuration with no derivations in the class.
\end{enumerate}
By the first simplification, for the class $\Av(123)$ we need only add obstructions with pattern $123$ and positions in cells which do not contain point obstructions. Further, obstructions can have at most one index in point cells as they contain all size 2 obstructions and at most one value in point rows. Figure~\ref{subfig: example tiling in class} shows the tiling after adding the basis pattern 123 in every possible way after this first simplification.

\begin{figure}[h]
    \centering
    \begin{subfigure}[c]{0.3\textwidth}
        \centering   
        \resizebox{0.95\textwidth}{!}{%
            \begin{tikzpicture}
                \node (example tiling) at (1,1) {
                \tiling{1.0}{5}{3}{0/1, 2/1, 3/0}%
                {
                {1/{(0.5, 0.5)}}, {1/{(1.5, 0.5)}}, {1/{(2.5, 0.5)}}, {1/{(4.5, 0.5)}}, {1/{(0.5, 2.5)}}, {1/{(1.5, 1.5)}}, {1/{(3.5, 1.5)}}, {1/{(2.5, 2.5)}}, {1/{(3.5, 2.5)}},
                {3/{(0.3, 1.6), (1.2, 2.4), (1.4, 2.6)}}, {3/{(0.8, 1.4), (4.2, 2.7), (4.4, 2.8)}}, {3/{(2.8, 1.8), (4.1, 2.5), (4.3, 2.6)}}, {3/{(3.3, 0.8), (4.1, 2.2), (4.23, 2.4)}}, {3/{(3.6, 0.7), (4.2, 1.7), (4.5, 2.2)}}, {3/{(4.6, 2.6), (4.75, 2.75), (4.9, 2.9)}}, {3/{(4.5, 1.75), (4.75, 2.25), (4.9, 2.53)}}%
                }
                {1/{(1.5,2.5)}, 1/{(4.5,1.5)}, 1/{(4.5,2.5)}}};
                \node at (3.7, 1) {$*$};
                \node at (3.7, 0) {$*$};
            \end{tikzpicture}
        }%
        \subcaption{Add the basis pattern 123 as an obstruction in every way.}
        \label{subfig: example tiling in class}
    \end{subfigure}
    \begin{subfigure}[c]{0.3\textwidth}
        \centering   
        \resizebox{0.95\textwidth}{!}{%
            \begin{tikzpicture}
                \node (example tiling) at (1,1) {
                \tiling{1.0}{5}{3}{0/1, 2/1, 3/0}%
                {
                {1/{(0.5, 0.5)}}, {1/{(1.5, 0.5)}}, {1/{(2.5, 0.5)}}, {1/{(4.5, 0.5)}}, {1/{(0.5, 2.5)}}, {1/{(1.5, 1.5)}}, {1/{(3.5, 1.5)}}, {1/{(2.5, 2.5)}}, {1/{(3.5, 2.5)}},
                {2/{(1.2, 2.4), (1.4, 2.6)}}, {2/{(4.2, 2.7), (4.4, 2.8)}}, {2/{(4.1, 2.5), (4.3, 2.6)}}, {2/{(4.1, 2.2), (4.23, 2.4)}}, {2/{(4.2, 1.7), (4.5, 2.2)}}, {3/{(4.6, 2.6), (4.75, 2.75), (4.9, 2.9)}}, {3/{(4.5, 1.75), (4.75, 2.25), (4.9, 2.53)}}%
                }
                {1/{(1.5,2.5)}, 1/{(4.5,1.5)}, 1/{(4.5,2.5)}}};
                \node at (3.7, 1) {$*$};
                \node at (3.7, 0) {$*$};
            \end{tikzpicture}
        }%
        \subcaption{Remove indices of basis patterns which are in point cells.}
        \label{subfig: example tiling in class, simplified once}
    \end{subfigure}
    \begin{subfigure}[c]{0.3\textwidth}
        \centering   
        \resizebox{0.95\textwidth}{!}{%
            \begin{tikzpicture}
                \node (example tiling) at (1,1) {
                \tiling{1.0}{5}{3}{0/1, 2/1, 3/0}%
                {
                {1/{(0.5, 0.5)}}, {1/{(1.5, 0.5)}}, {1/{(2.5, 0.5)}}, {1/{(4.5, 0.5)}}, {1/{(0.5, 2.5)}}, {1/{(1.5, 1.5)}}, {1/{(3.5, 1.5)}}, {1/{(2.5, 2.5)}}, {1/{(3.5, 2.5)}},
                {2/{(1.2, 2.4), (1.4, 2.6)}}, {2/{(4.2, 2.7), (4.4, 2.8)}},  {2/{(4.2, 1.7), (4.5, 2.2)}}%
                }
                {1/{(1.5,2.5)}, 1/{(4.5,1.5)}, 1/{(4.5,2.5)}}};
                \node at (3.7, 1) {$*$};
                \node at (3.7, 0) {$*$};
            \end{tikzpicture}
        }%
        \subcaption{Simplify obstructions contained by other obstructions.}
        \label{subfig: example tiling in class, simplified twice}
    \end{subfigure}
    \caption{Adding basis elements from the class $\Av(123)$ to an intermediate tiling as obstructions and simplifying.}
    \label{fig:simplifying a tiling}
\end{figure}

By the second simplification, any obstructions with an index in a point cell can be simplified to a smaller obstruction. This is done in Figure~\ref{subfig: example tiling in class, simplified once}. And finally, the obstructions can be simplified again by containment, as in Figure~\ref{subfig: example tiling in class, simplified twice}.
More details of simplifying tilings are given in Section 6.3.2 of Albert \textit{et al.} \cite{combexp}, with some additional care needed when considering  point rows. However, the above simplifications are sufficient for our purposes.

We will now explain how a Cayley permutation class can be enumerated using tilings, using the class $\Av(123)$ as an example.
Begin with the tiling representing the configuration $~\diamond~$ for the class, as shown in Figure~\ref{fig: single slot tiling Av(123)} for our example.

\begin{figure}[h]
    \centering
    \begin{tikzpicture}
        \node (simple) at (0,0) {
        \tiling{2.0}{1}{1}{}%
        {%
        {3/{(0.55, 0.8), (0.35, 0.6), (0.15, 0.4)}}}
        {%
        {1/{(0.5, 0.5)}}%
        }};
    \end{tikzpicture}
    \caption{The tiling representing the the configuration $~\diamond~$ in the class $\Av(123)$.}
    \label{fig: single slot tiling Av(123)}
\end{figure}

For each tiling, an insertion can be made into any cell containing a point requirement as these represent the slots in the configuration.
From $~\diamond~$, there are four possible insertions that can be made; $f_{1, 1}$, $\ell_{1, 1}$, $m_{1, 1}$, and $r_{1, 1}$. 
Their intermediate tilings are shown in Figure~\ref{fig: four intermediate tilings}.
Each intermediate tiling is size $3 \times 2$ with a point cell in position $(1, 0)$ representing the newly added point and all the obstructions and requirements necessary for a point cell.
For each type of insertion $\ell$, $m$, $r$, and $f$, we add 
the gridded Cayley permutations $g_1 = (1, (0,1))$, $g_2 = (1, (2,0))$, and $g_3 = (1, (2,1))$ as either obstructions or requirements.
For an $f$ insertion the slot has been filled so both the first and third column are empty, so all of $g_1$, $g_2$, and $g_3$ are added as obstructions. For an $l$ insertion, $g_1$ is added as an obstruction and $R_2 = \{g_2, g_3\}$ is added as a requirement list, as the first column must be empty but the third column must contain a point.
Similarly, for an $r$ insertion, $R_1 = \{g_1\}$ is added as a requirement list and $g_2$ and $g_3$ are added as obstructions, and for an $m$ insertion both $R_1$ and $R_2$ are added as requirement lists.

\begin{figure}[h]
    \centering
    \begin{tikzpicture}
        \node (f insertion) at (0,0) {
        \tiling{1.0}{3}{2}{1/0}%
        {%
        {1/{(2.5, 0.5)}}, {1/{(0.5, 1.5)}}, {1/{(2.5, 1.5)}}, {1/{(0.5, 0.5)}}, {1/{(1.5, 1.5)}}%
        }
        {}};
        \node (star) at (1.7,-0.5) {$*$};
        \node (f_1,1) at (0, -1.5) {$1$};
        
        \node (l insertion) at (4,0) {
        \tiling{1.0}{3}{2}{1/0}%
        {%
        {1/{(0.5, 1.5)}}, {1/{(2.5, 1.5)}}, {1/{(0.5, 0.5)}}, {1/{(1.5, 1.5)}}%
        }
        {1/{(2.5,1.5)}, 1/{(2.5,0.5)}}};
        \node (star) at (5.7,-0.5) {$*$};
        \node (l_1,1) at (4, -1.5) {$~1~\diamond~$};
        
        \node (r insertion) at (8,0) {
        \tiling{1.0}{3}{2}{1/0}%
        {%
        {1/{(2.5, 0.5)}}, {1/{(2.5, 1.5)}}, {1/{(0.5, 0.5)}}, {1/{(1.5, 1.5)}}%
        }
        {1/{(0.5,1.5)}}};
        \node (star) at (9.7,-0.5) {$*$};
        \node (r_1,1) at (8, -1.5) {$~\diamond~1~$};

        \node (m insertion) at (12,0) {
        \tiling{1.0}{3}{2}{1/0}%
        {%
        {1/{(0.5, 1.5)}}, {1/{(2.5, 1.5)}}, {1/{(0.5, 0.5)}}, {1/{(1.5, 1.5)}}%
        }
        {1/{(0.5,1.5)}, 1/{(2.5,1.5)}, 1/{(2.5,0.5)}}}; 
        \node (star) at (13.7,-0.5) {$*$};
        \node (m_1,1) at (12, -1.5) {$~\diamond~1\diamond~$};
    \end{tikzpicture}
    \caption{The four intermediate tilings representing configurations after one insertion into $~\diamond~$ with the configuration they represent underneath.}
    \label{fig: four intermediate tilings}
\end{figure}

For each of these intermediate tilings, we add all of the gridded Cayley permutations in $\mathcal{GC}^{(3, 2)}$ with an underlying Cayley permutation in $B = \{123\}$ as obstructions and simplify as before.
These four tilings are shown in Figure~\ref{fig: first rule for Av(123)} and represent all of the possible configurations after one insertion into the configuration $~\diamond~$ in the class $\Av(123)$. These tilings are called the \emph{children} of the original tiling, which is called the \emph{parent} tiling. We only consider a tiling to be a child of it's parent if it has derivations in the class (see simplification 4). Together they form a \emph{rule}, mapping the parent to its children which represent all possible configurations after one insertion. This is shown in Figure~\ref{fig: first rule for Av(123)} with a label for each tiling. 

\begin{figure}[h]
    \centering
    \begin{tikzpicture}
        \node (start) at (0,0) {
        \tiling{2.0}{1}{1}{}%
        {%
        {3/{(0.55, 0.8), (0.35, 0.6), (0.15, 0.4)}}}
        {%
        {1/{(0.5, 0.5)}}%
        }};
        \node (label S) at (0, -1.5) {$S$};
        
        \node (arrow) at (1.5,0) {$\to$};
        
        \node (f insertion) at (2.5,0) {
            \tiling{1.0}{1}{1}{0/0}%
            {%
            }
            {}};
        \node (star) at (3.2,0) {$*$};
        \node (label A) at (2.5, -1) {$A$};
        
        \node (plus) at (3.7,0) {$+$};
        
        \node (l insertion) at (5.2,0) {
        \tiling{1.0}{2}{2}{0/0}%
        {%
        {1/{(0.5, 1.5)}}, {1/{(1.5, 1.5)}}, {2/{(1.45, 1.8), (1.25, 1.6)}}%
        }
        {1/{(1.5,1.5)}, 1/{(1.5,0.5)}}};
        \node (star) at (6.4,-0.5) {$*$};
        \node (label B) at (5.2, -1.5) {$B$};
        
        \node (plus) at (6.9,0) {$+$};
        
        \node (r insertion) at (8.5,0) {
        \tiling{1.0}{2}{2}{1/0}%
        {%
        {1/{(0.5, 0.5)}}, {1/{(1.5, 1.5)}}, {3/{(0.55, 1.8), (0.35, 1.6), (0.15, 1.4)}}%
        }
        {1/{(0.5,1.5)}}};
        \node (star) at (9.7,-0.5) {$*$};
        \node (label D) at (8.5, -1.5) {$D$};
        
        \node (plus) at (10.2,0) {$+$};
        
        \node (m insertion) at (12.2,0) {
        \tiling{1.0}{3}{2}{1/0}%
        {%
        {1/{(0.5, 1.5)}}, {1/{(2.5, 1.5)}}, {1/{(0.5, 0.5)}}, {1/{(1.5, 1.5)}}, {3/{(0.55, 1.8), (0.35, 1.6), (0.15, 1.4)}}, {2/{(2.45, 1.8), (2.25, 1.6)}}%
        }
        {1/{(0.5,1.5)}, 1/{(2.5,1.5)}, 1/{(2.5,0.5)}}}; 
        \node (star) at (13.9,-0.5) {$*$};
        \node (label C) at (12.2, -1.5) {$C$};
    \end{tikzpicture}
    \caption{A rule showing the parent tiling representing $~\diamond~$ and its children representing all possible configurations after one insertion in the class $\Av(123)$.}
    \label{fig: first rule for Av(123)}
\end{figure}

We can keep going in this manner by inserting the leftmost minimum point into each new tiling which contains requirements. If our letter is of the form $x_{i, 0}$ then we place the point in a point row and if our letter is of the form $x_{i, 1}$ then we place the point in a row which is not a point row.
For example, the children of the tiling labelled $B$ in Figure~\ref{fig: first rule for Av(123)} after simplification are shown in Figure~\ref{fig: another rule for Av(123)}. The configuration $B$ represents, $~1~\diamond~$, has eight possible insertions; $f_{1, 0}$, $\ell_{1, 0}$, $r_{1, 0}$, $m_{1, 0}$, $f_{1, 1}$, $\ell_{1, 1}$, $r_{1, 1}$, and $m_{1, 1}$. The children of $B$ are shown in the same respective order.

\begin{figure}[h]
    \centering
    \resizebox{\linewidth}{!}{%
        \begin{tikzpicture}
            \node (start) at (0.7,0) {
            \tiling{1.0}{2}{2}{0/0}%
            {%
            {1/{(0.5, 1.5)}}, {1/{(1.5, 1.5)}}, {2/{(1.45, 1.8), (1.25, 1.6)}}%
            }
            {1/{(1.5,1.5)}, 1/{(1.5,0.5)}}};
            \node (label B) at (0.7, -1.5) {$B$};
            
            \node (arrow) at (2,0) {$\to$};
            
            \node (f insertion) at (3.4,0) {
                \tiling{1.0}{2}{1}{0/0, 1/0}%
                {%
                }
                {}};
            \node (star) at (4.6,0) {$*$};
            \node (label) at (3.4, -1) {$E$};
            
            \node (plus) at (4.9,0) {$+$};
            
            \node (l insertion) at (6.7,0) {
            \tiling{1.0}{3}{2}{0/0, 1/0}%
            {%
            {1/{(0.5, 1.5)}}, {1/{(1.5, 1.5)}}, {2/{(2.45, 1.8), (2.25, 1.6)}}%
            }
            {1/{(2.5,1.5)}, 1/{(2.5,0.5)}}};
            \node (star) at (8.4,-0.5) {$*$};
            \node (label) at (6.7, -1.5) {$F$};
            
            \node (plus) at (8.6,0) {$+$};
            
            \node (r insertion) at (10.35,0) {
            \tiling{1.0}{3}{2}{0/0,2/0}%
            {%
            {1/{(0.5, 1.5)}}, {1/{(1.5, 0.5)}}, {1/{(2.5, 1.5)}}, {2/{(1.45, 1.8), (1.25, 1.6)}}%
            }
            {1/{(1.5,1.5)}}};
            \node (star) at (12,-0.5) {$*$};
            \node (label) at (10.35, -1.5) {$G$};
            
            \node (plus) at (12.35,0) {$+$};
            
            \node (m insertion) at (14.7,0) {
            \tiling{1.0}{4}{2}{0/0,2/0}%
            {%
            {1/{(0.5, 1.5)}}, {1/{(1.5, 1.5)}}, {1/{(3.5, 1.5)}}, {1/{(1.5, 0.5)}}, {1/{(2.5, 1.5)}}, {2/{(1.45, 1.8), (1.25, 1.6)}}, {2/{(3.45, 1.8), (3.25, 1.6)}}%
            }
            {1/{(1.5,1.5)}, 1/{(3.5,1.5)}, 1/{(3.5,0.5)}}}; 
            \node (star) at (17,-0.5) {$*$};
            \node (label) at (14.7, -1.5) {$H$};
            
            
            \node (plus) at (2.5,-4) {$+$};
            
            \node (f insertion) at (3.9,-4) {
            \tiling{1.0}{2}{2}{0/0, 1/1}%
            {%
            {1/(0.5, 1.5)}, {1/{(1.5, 0.5)}}%
            }
            {}};
            \node (star) at (5.1,-4.5) {$*$};
            \node (star) at (5.1,-3.5) {$*$};
            \node (label) at (3.9, -5.5) {$I$};
            
            \node (plus) at (5.4,-4) {$+$};
            
            \node (l insertion) at (7.2,-4) {
            \tiling{1.0}{3}{2}{0/0, 1/1}%
            {%
            {1/{(0.5, 1.5)}}, {1/{(1.5, 0.5)}}, {1/{(2.5, 0.5)}}%
            }
            {1/{(2.5,1.5)}}};
            \node (star) at (8.9,-3.5) {$*$};
            \node (star) at (8.9,-4.5) {$*$};
            \node (label) at (7.2, -5.5) {$J$};
            
            \node (plus) at (9.1,-4) {$+$};
            
            \node (r insertion) at (10.85,-4) {
            \tiling{1.0}{3}{3}{0/0,2/1}%
            {%
            {1/{(1.5, 1.5)}}, {1/{(2.5, 0.5)}}, {1/{(0.5, 1.5)}}, {1/{(0.5, 2.5)}}, {1/{(1.5, 0.5)}}, {1/{(2.5, 2.5)}}, {2/{(1.45, 2.8), (1.25, 2.6)}}%
            }
            {1/{(1.5,2.5)}}};
            \node (star) at (12.5,-5) {$*$};
            \node (star) at (12.5,-4) {$*$};
            \node (label) at (10.85, -6) {$K$};
            
            \node (plus) at (12.85,-4) {$+$};
            
            \node (m insertion) at (15.2,-4) {
            \tiling{1.0}{4}{3}{0/0,2/1}%
            {%
            {1/{(0.5, 1.5)}}, {1/{(1.5, 0.5)}}, {1/{(2.5, 0.5)}}, {1/{(3.5, 0.5)}}, {1/{(0.5, 2.5)}}, {1/{(1.5, 2.5)}}, {1/{(3.5, 2.5)}}, {1/{(1.5, 1.5)}}, {1/{(2.5, 2.5)}}, {2/{(1.45, 2.8), (1.25, 2.6)}}, {1/{(3.5, 2.5)}}%
            }
            {1/{(1.5,2.5)}, 1/{(3.5,1.5)}}}; 
            \node (star) at (17.5,-4) {$*$};
            \node (star) at (17.5,-5) {$*$};
            \node (label) at (15.2, -6) {$L$};
        \end{tikzpicture}
    }%
    \caption{Another rule for the class $\Av(123)$.}
    \label{fig: another rule for Av(123)}
\end{figure}

As we keep track of occurrences of basis patterns through obstructions, the point cells do not add any information so we can remove them from our tilings. This is called \emph{factoring} in Albert \textit{et al.}~\cite{combexp}. 
We can apply the same factoring conditions that they use to our tilings, except that we can additionally factor cells in point rows.
Every point cell and the column it is in can be factored out from the tiling. By the third simplification, any row which contains point obstructions in every cell can be removed from a tiling. 
Figure~\ref{fig: factorised tilings} shows the factorisation of some tilings found so far for the class $\Av(123)$.

\begin{figure}[h]
    \centering
    \begin{tikzpicture}
        \node at (-2.2, 0) {};
        \node (start) at (-0.2,0) {
        \tiling{1.0}{2}{2}{0/0}%
        {%
        {1/{(0.5, 1.5)}}, {1/{(1.5, 1.5)}}, {2/{(1.45, 1.8), (1.25, 1.6)}}%
        }
        {1/{(1.5,1.5)}, 1/{(1.5,0.5)}}};
        \node (star) at (1,-0.5) {$*$};
        \node (label) at (0, -1.5) {$B$};
        
        \node (arrow) at (1.5,0) {$\cong$};
        
        \node (f insertion) at (2.5,0) {
            \tiling{1.0}{1}{1}{0/0}%
            {%
            }
            {}};
        \node (star) at (3.2,0) {$*$};
        \node (label A) at (2.5, -1) {$A$};
        
        \node (plus) at (3.7,0) {$\times$};
        
        \node (l insertion) at (4.7,0) {
        \tiling{1.0}{1}{2}{}%
        {%
        {1/{(0.5, 1.5)}}, {2/{(0.45, 1.8), (0.25, 1.6)}}%
        }
        {1/{(0.5,1.5)}, 1/{(0.5,0.5)}}};
        \node (star) at (5.4,-0.5) {$*$};
        \node (label) at (4.7, -1.5) {$M$};
        
        \node (star) at (7.6,-0.5) {};
    \end{tikzpicture}
    
    \begin{tikzpicture}
        \node at (-2.2, 0) {};
        \node (start) at (-0.2,0) {
            \tiling{1.0}{2}{1}{0/0, 1/0}%
            {%
            }
            {}};
        \node (star) at (1,0) {$*$};
        \node (label) at (0, -1) {$E$};
        
        \node (arrow) at (1.5,0) {$\cong$};
        
        \node (f insertion) at (2.5,0) {
            \tiling{1.0}{1}{1}{0/0}%
            {%
            }
            {}};
        \node (star) at (3.2,0) {$*$};
        \node (label A) at (2.5, -1) {$A$};
        
        \node (plus) at (3.7,0) {$\times$};
        
        \node (l insertion) at (4.7,0){
            \tiling{1.0}{1}{1}{0/0}%
            {%
            }
            {}};
        \node (star) at (5.4,0) {$*$};
        \node (label) at (4.7, -1) {$A$};
        
        \node (star) at (7.6,-0.5) {};
    \end{tikzpicture}
    
    \begin{tikzpicture}
        \node (start) at (-0.7,0)  {
        \tiling{1.0}{3}{2}{0/0, 1/0}%
        {%
        {1/{(0.5, 1.5)}}, {1/{(1.5, 1.5)}}, {2/{(2.45, 1.8), (2.25, 1.6)}}%
        }
        {1/{(2.5,1.5)}, 1/{(2.5,0.5)}}};
        \node (star) at (1,-0.5) {$*$};
        \node (label) at (-0.7, -1.5) {$F$};
        
        \node (arrow) at (1.5,0) {$\cong$};
        
        \node (f insertion) at (2.5,0) {
            \tiling{1.0}{1}{1}{0/0}%
            {%
            }
            {}};
        \node (star) at (3.2,0) {$*$};
        \node (label A) at (2.5, -1) {$A$};
        
        \node (plus) at (3.7,0) {$\times$};
        
        \node (l insertion) at (4.7,0){
            \tiling{1.0}{1}{1}{0/0}%
            {%
            }
            {}};
        \node (star) at (5.4,0) {$*$};
        \node (label) at (4.7, -1) {$A$};
        
        \node (plus) at (5.9,0) {$\times$};
        
        \node (l insertion) at (6.9,0){
        \tiling{1.0}{1}{2}{}%
        {%
        {1/{(0.5, 1.5)}}, {2/{(0.45, 1.8), (0.25, 1.6)}}%
        }
        {1/{(0.5,1.5)}, 1/{(0.5,0.5)}}};
        \node (star) at (7.6,-0.5) {$*$};
        \node (label) at (6.9, -1.5) {$M$};
    \end{tikzpicture}

    \begin{tikzpicture}
        \node at (-2.2, 0) {};
        \node (start) at (-0.2,0) {
        \tiling{1.0}{2}{2}{1/0}%
        {%
        {1/{(0.5, 0.5)}}, {1/{(1.5, 1.5)}}, {3/{(0.55, 1.8), (0.35, 1.6), (0.15, 1.4)}}%
        }
        {1/{(0.5,1.5)}}};
        \node (star) at (1,-0.5) {$*$};
        \node (label) at (0, -1.5) {$D$};
        
        \node (arrow) at (1.5,0) {$\cong$};
        
        \node (f insertion) at (2.5,0){
        \tiling{1.0}{1}{1}{}%
        {%
        {3/{(0.55, 0.8), (0.35, 0.6), (0.15, 0.4)}}}
        {%
        {1/{(0.5, 0.5)}}%
        }};
        \node (label A) at (2.5, -1) {$S$};
        
        \node (plus) at (3.5,0) {$\times$};
        
        \node (l insertion) at (4.5,0) {
            \tiling{1.0}{1}{1}{0/0}%
            {%
            }
            {}};
        \node (star) at (5.2,0) {$*$};
        \node (label) at (4.5, -1) {$A$};
        
        \node (star) at (7.6,-0.5) {};
    \end{tikzpicture}
    \caption{Examples of factorising tilings.}
    \label{fig: factorised tilings}
\end{figure}

Factoring always creates a collection of $1 \times 1$ tilings containing only point cells and at most one tiling with no point cells and a requirement list in every column, representing slots in a configuration which we call the \emph{factored tiling}. 
Additionally, as seen in Figure~\ref{fig: factorised tilings}, the factored tilings are often the same even though the tiling represents different configurations.
This allows us to reduce the number of tilings we need to consider. As tilings are also the states in the DFA created, this also reduces the number of states in the DFA we construct. 
In practice, the algorithm works directly on the factored tilings rather than considering the configurations they represent so the process is more efficient, but we will not go into the details here. For more information on how this works, please see Albert \textit{et al.}~\cite{combexp}.

By design factored tilings will never have more than two rows and for $k$-slot-bounded classes will never have more than $k$ columns. 
Moreover, as the basis patterns are a fixed size and there is a finite number of ways in which they can appear on a tiling, there are finitely many tilings of a given size.
This proves that continuing this process will eventually terminate and result in a DFA which recognises the vertical insertion encodings of the slot-bounded Cayley permutation class of interest.
In the next section we give a full example of a DFA constructed using this method for the class $\Av(231, 312, 2121)$.

\subsection{Implementation of the algorithm}
\label{sec:algo-code}

We have implemented the algorithm described in Section~\ref{sec:tilings} in Python. 
For the horizontal insertion encoding, which will be described in Section~\ref{sec:left_to_right}, we have also implemented a similar algorithm. Both implementations can be found on GitHub \cite{Bean_cperms_ins_enc_2025} and can be used to enumerate any class of Cayley permutations with a set of vertical or horizontal insertion encodings that form a regular language. For example, as discussed in Section~\ref{sec:intro}, Cerbai~\cite{Cerbai2021} found the class of hare pop-stack sortable Cayley permutations to be $\Av(231, 312, 2121)$. Our implementation of the algorithm can be used to find a set of 9 rules for this class shown in Figures~\ref{fig: rule 1 of spec, S}, \ref{fig: rule 2 of spec, A}, \ref{fig: rule 3 of spec, B}, \ref{fig: rule 4 of spec, C}, \ref{fig: rule 5 of spec, D}, \ref{fig: rule 6 of spec, E}, \ref{fig: rule 7 of spec, F}, \ref{fig: rule 8 of spec, G}, and \ref{fig: rule 9 of spec, H}. Each figure shows the rule with unfactored tilings to make it clear which insertion corresponds to which tiling and with factored tilings to make it clear which fully factored tilings are of interest for the class. The corresponding rule in the grammar is also shown.

\begin{figure}



    \caption{$H \to f_{2,0} F \:|\: \ell_{1,0}F$}
    \label{fig: rule 9 of spec, H}
\end{figure}

These 9 rules can be turned into the following grammar for the class.
\begin{equation*}
    \begin{split}
        S & \to f_{1,1} \: |\: \ell_{1,1}A \:|\: r_{1,1} B \:|\: m_{1,1} C                                                                  \\
        A & \to  f_{1,1} \: |\: \ell_{1,1}A \:|\: r_{1,1} B \:|\: m_{1,1} C \:|\: f_{1,0} \:|\: \ell_{1,0}A \:|\: r_{1,0} B \:|\: m_{1,0} C \\
        B & \to f_{1,1} \:|\: \ell_{1,1}D \:|\: r_{1,1} B \:|\: m_{1,1} E                                                                   \\
        C & \to f_{2,0} B \:|\: \ell_{2,0}C \:|\:  f_{1,1} A \:|\: \ell_{1,1}F \:|\: r_{1,1}G \:|\: m_{1,1}H                                \\
        D & \to f_{1,0} \:|\: \ell_{1,0}D                                                                                                   \\
        E & \to f_{2,0} B \:|\: \ell_{2,0}E                                                                                                 \\
        F & \to f_{1,0} A \:|\: \ell_{1,0}F                                                                                                 \\
        G & \to f_{1,1} A \:|\: \ell_{1,1}F \:|\: r_{1,1}G \:|\: m_{1,1}H                                                                   \\
        H & \to f_{2,0} F \:|\: \ell_{1,0}F                                                                                                 \\
    \end{split}
\end{equation*}

Let $S(x)$ be the generating function for $S$, $A(x)$ be the generating function for $A$, and so on. Then we have the following system of equations.

\begin{equation*}
    \begin{split}
        S(x) & = x \: +\: xA(x) \:+\: x B(x) \:+\: x C(x)                                              \\
        A(x) & = x \: +\: xA(x) \:+\: x B(x) \:+\: x C(x) \:+\: x \:+\: xA(x) \:+\: x B(x) \:+\: xC(x) \\
        B(x) & = x \:+\: xD(x) \:+\: x B(x) \:+\: x E(x)                                               \\
        C(x) & = x B(x) \:+\: xC(x) \:+\: x A(x) \:+\: xF(x) \:+\:xG(x) \:+\:xH(x)                     \\
        D(x) & = x \:+\: xD(x)                                                                         \\
        E(x) & = x B(x) \:+\: xE(x)                                                                    \\
        F(x) & = x A(x) \:+\: xF(x)                                                                    \\
        G(x) & = x A(x) \:+\: xF(x) \:+\:xG(x) \:+\:xH(x)                                              \\
        H(x) & = x A(x) \:+\: xF(x)                                                                    \\
    \end{split}
\end{equation*}

This solves to give the generating function $S(x)$ for $\Av(231, 312, 2121)$
\begin{equation*}
    \begin{split}
        S(x) & = \frac{2x^{2} - 2x^{3} - x}{4x^{3} - 6x^{2} + 5x - 1}.
    \end{split}
\end{equation*}
This is the generating function for the sequence beginning $1, 3, 11, 41, 151, 553, 2023, 7401, 27079, 99081$ which is sequence A335793 in the OEIS~\cite{oeis}.

\section{Horizontal insertion encoding}
\label{sec:left_to_right}

Unlike for permutations, Cayley permutations are not symmetric under inverses so the rules that govern inserting new rightmost values are different from those of inserting new maximum values. In this section, we will define the \emph{horizontal insertion encoding} that instead traces how Cayley permutations are built from left to right. This is strictly different from the vertical insertion encoding for Cayley permutations although the vertical and horizontal versions are equivalent under the inverse symmetry for permutations.

In this section, we will also classify when the set of horizontal insertion encodings of a class is regular. A similar algorithm as in Section~\ref{sec:regular-algo} can be used to enumerate Cayley permutation classes with horizontal insertion encodings which form a regular language. Although we will not describe it in this paper, we have an implementation for this on GitHub~\cite{Bean_cperms_ins_enc_2025} as well.

The set of Cayley permutations can be built by iteratively adding new rightmost values. As Cayley permutations can have repeated values, the insertion of the new rightmost number can be either a new value or a repeated value already in the Cayley permutation. The \emph{(horizontal) evolution} traces how a Cayley permutation is built in this way, starting from the empty Cayley permutation. Again, we use $\diamond$ to denote where future points will be inserted. To denote that the value to come is a repeated value, we use $\overline{\diamond}$. We call $\diamond$ a \emph{new slot} and $\overline{\diamond}$ a \emph{repeating slot}. For example, the evolution of the Cayley permutation 243441 is shown in Figure~\ref{fig: 243441 hori evolution}.

\begin{figure}[h]
    \begin{center}
        \begin{tikzpicture}
            \node at (0,0) {$\diamond$};
            
            \node at (2em,0) {$\rightarrow$};
            
            \node at (4em, 0) {$1$};
            \node at (5em, 1em) {$\diamond$};
            \node at (5em, -1em) {$\diamond$};
            
            \node at (7em, 0) {$\rightarrow$};
            
            \node at (9em, 0) {$1$};
            \node at (9.6em, 2em) {$2$};
            \node at (10.6em, -1em) {$\diamond$};
            \node at (10.6em, 1em) {$\diamond$};
            \node at (10.6em, 2em) {$\overline{\diamond}$};
            
            \node at (12.6em, 0) {$\rightarrow$};
            
            \node at (14.6em, 0) {$1$};
            \node at (15.2em, 2em) {$3$};
            \node at (15.8em, 1em) {$2$};
            \node at (16.8em, -1em) {$\diamond$};
            \node at (16.8em, 2em) {$\overline{\diamond}$};
            
            \node at (18.8em, 0) {$\rightarrow$};
            
            \node at (20.8em, 0) {$1$};
            \node at (21.4em, 2em) {$3$};
            \node at (22em, 1em) {$2$};
            \node at (22.6em, 2em) {$3$};
            \node at (23.6em, -1em) {$\diamond$};
            \node at (23.6em, 2em) {$\overline{\diamond}$};
            
            \node at (25.6em, 0) {$\rightarrow$};
            
            \node at (27.6em, 0) {$1$};
            \node at (28.2em, 2em) {$3$};
            \node at (28.8em, 1em) {$2$};
            \node at (29.4em, 2em) {$3$};
            \node at (30em, 2em) {$3$};
            \node at (31em, -1em) {$\diamond$};
            
            \node at (33em, 0) {$\rightarrow$};
            
            \node at (35em, 0) {$2$};
            \node at (35.6em, 2em) {$4$};
            \node at (36.2em, 1em) {$3$};
            \node at (36.8em, 2em) {$4$};
            \node at (37.4em, 2em) {$4$};
            \node at (38em, -1em) {$1$};
            
        \end{tikzpicture}
    \end{center}
    \caption{The horizontal evolution of the Cayley permutation 243441.}
    \label{fig: 243441 hori evolution}
\end{figure}

Again, we call each step of the evolution a \emph{configuration} and call the points in a configuration $c$ which have already been placed the \emph{prefix} of $c$. The prefix of a configuration always forms a Cayley permutation. An insertion in a conconfiguration is determined by the index of the slot inserted into, the way the slot is effected by the insertion, and whether or not there are more of the same value to be inserted. 

We use the letters $u$, $d$, $m$, and $f$ denoting up, down, middle, and fill respectively. 
For each insertion, we use the letter $a_{i, j}$ with $a \in \{u, d, m, f\}$, $i \in \mathbb{N}_1$ as the index of the slot being inserted into counting from bottom to top, and $j \in \{0, 1\}$ to tell us if there are more of the same value still to be inserted. Only $f$ can be inserted into a repeating slot. Table~\ref{tab:horizontal-insertions} shows all possible insertions of a value $n$ into a slot at index $i$.
\begin{table}[h]
    \centering
    \begin{tabular}{|ccccc|ccccc|}
        \hline
        $u_{i, 0}$: & $\diamond$            & $\rightarrow$ & $n$ &            & $u_{i, 1}$: & $\diamond$            & $\rightarrow$ & $n$ & $\overline{\diamond}$ \\
                    &                       &               &     & $\diamond$ &             &                       &               &     & $\diamond$            \\
        \hline
                    &                       &               &     & $\diamond$ &             &                       &               &     & $\diamond$            \\
        $m_{i, 0}$: & $\diamond$            & $\rightarrow$ & $n$ &            & $m_{i, 1}:$ & $\diamond$            & $\rightarrow$ & $n$ & $\overline{\diamond}$ \\
                    &                       &               &     & $\diamond$ &             &                       &               &     & $\diamond$            \\
        \hline
                    &                       &               &     & $\diamond$ &             &                       &               &     & $\diamond$            \\
        $d_{i, 0}$: & $\diamond$            & $\rightarrow$ & $n$ &            & $d_{i, 1}$: & $\diamond$            & $\rightarrow$ & $n$ & $\overline{\diamond}$ \\
        \hline
        $f_{i, 0}$: & $\diamond$            & $\rightarrow$ & $n$ &            & $f_{i, 1}$: & $\diamond$            & $\rightarrow$ & $n$ & $\overline{\diamond}$ \\
        \hline
        $f_{i, 0}$: & $\overline{\diamond}$ & $\rightarrow$ & $n$ &            & $f_{i, 1}$: & $\overline{\diamond}$ & $\rightarrow$ & $n$ & $\overline{\diamond}$ \\
        \hline
    \end{tabular}
    \caption{Types of insertions of the value $n$ into a slot at index $i$. Only $f_{i, 0}$ and $f_{i, 1}$ can insert into $\overline{\diamond}$.}
    \label{tab:horizontal-insertions}
\end{table}

For example, the horizontal insertion encoding for the Cayley permutation $243441$ is the word $m_{1,0}u_{2,1}f_{2,0}f_{2,1}f_{2,0}f_{1,0}$ and the evolution is shown in Figure~\ref{fig: 243441 hori evolution, with encoding}.

\begin{figure}[h]
    \begin{center}
        \begin{tikzpicture}
            \node at (0,0) {$\diamond$};
            
            \node at (2em,0) {$\rightarrow$};
            \node at (2em,1em) {$m_{1, 0}$};
            
            \node at (4em, 0) {$1$};
            \node at (5em, 1em) {$\diamond$};
            \node at (5em, -1em) {$\diamond$};
            
            \node at (7em, 0) {$\rightarrow$};
            \node at (7em,1em) {$u_{2, 1}$};
            
            \node at (9em, 0) {$1$};
            \node at (9.6em, 2em) {$2$};
            \node at (10.6em, -1em) {$\diamond$};
            \node at (10.6em, 1em) {$\diamond$};
            \node at (10.6em, 2em) {$\overline{\diamond}$};
            
            \node at (12.6em, 0) {$\rightarrow$};
            \node at (12.6em,1em) {$f_{2, 0}$};
            
            \node at (14.6em, 0) {$1$};
            \node at (15.2em, 2em) {$3$};
            \node at (15.8em, 1em) {$2$};
            \node at (16.8em, -1em) {$\diamond$};
            \node at (16.8em, 2em) {$\overline{\diamond}$};
            
            \node at (18.8em, 0) {$\rightarrow$};
            \node at (18.8em,1em) {$f_{2, 1}$};
            
            \node at (20.8em, 0) {$1$};
            \node at (21.4em, 2em) {$3$};
            \node at (22em, 1em) {$2$};
            \node at (22.6em, 2em) {$3$};
            \node at (23.6em, -1em) {$\diamond$};
            \node at (23.6em, 2em) {$\overline{\diamond}$};
            
            \node at (25.6em, 0) {$\rightarrow$};
            \node at (25.6em,1em) {$f_{2, 0}$};
            
            \node at (27.6em, 0) {$1$};
            \node at (28.2em, 2em) {$3$};
            \node at (28.8em, 1em) {$2$};
            \node at (29.4em, 2em) {$3$};
            \node at (30em, 2em) {$3$};
            \node at (31em, -1em) {$\diamond$};
            
            \node at (33em, 0) {$\rightarrow$};
            \node at (33em,1em) {$f_{1, 0}$};
            
            \node at (35em, 0) {$2$};
            \node at (35.6em, 2em) {$4$};
            \node at (36.2em, 1em) {$3$};
            \node at (36.8em, 2em) {$4$};
            \node at (37.4em, 2em) {$4$};
            \node at (38em, -1em) {$1$};
            
        \end{tikzpicture}
    \end{center}
    \caption{The horizontal evolution of the Cayley permutation 243441 with the encoding shown with each insertion.}
    \label{fig: 243441 hori evolution, with encoding}
\end{figure}

As for vertical evolutions, for a configuration $c$, we call the Cayley permutations which have $c$ as a configuration in its horizontal evolution the derivations of $c$, denoted $D(c)$.

Once again, for any configuration with $s$ slots, the minimum number of insertions needed to create a Cayley permutation is $s$. Therefore, for any Cayley permutation class with a set of horizontal insertion encodings which is regular there exists a bound $k$ on the number of slots in any configuration leading to a Cayley permutation in the class. We call such classes $\emph{(horizontal) slot-bounded}$. Being slot-bounded is also a sufficient condition for a Cayley permutation class to have a set of horizontal insertion encodings which is regular. This can be shown by adapting the algorithm in Section~\ref{sec:regular-algo} to the horizontal case, which we will not do here for brevity. In this section, we will give a characterisation of the slot-bounded Cayley permutation classes. 

Let $\SBH(k)$ be the set of Cayley permutations with evolutions that contain configurations with at most $k$ slots. We will show that, as with the vertical case, for each $k$ this is a finitely-based Cayley permutation class by proving certain properties of configurations with $k$ slots, leading to Proposition~\ref{prop:SBH-basis} which describes the basis of $\SBH(k)$.

First, note that if a Cayley permutation has more than $k$ slots in its evolution then it must contain a configuration with exactly $k + 1$ slots since it is only possible to reduce the number of slots by one with an $f_{i, 0}$ insertion.
For a configuration with $k+1$ slots, the slots could either be a $\diamond$ or a $\overline{\diamond}$ slot. For every repeating slot, there is at least one occurrence of the same value in the prefix of the configuration and for every pair of new slots there must be at least one occurrence of a value between them in the prefix, as shown in Figure~\ref{fig:slots-imply-points}.

\begin{figure}[h]
    \begin{center}
        \begin{tikzpicture}
            \foreach \x/\y in {0/3, 0/2.5, 2.5/3, 2.5/2.5}
            \node [font=\normalsize] at (\x,\y) {$\diamond$};
            \foreach \x/\y in {0/3.7, 0/2, 2.5/3.7, 2.5/2, 4.9/3.5, 4.9/2.2, 7.4/3.5, 7.4/2.2}
            \node [font=\normalsize] at (\x,\y) {$\vdots$};
            \foreach \x/\y in {4.9/2.75, 7.4/2.75}
            \node [font=\normalsize] at (\x,\y) {$\overline{\diamond}$};
            \foreach \x/\y in {2/2.75, 7/2.75}
            \node [font=\normalsize] at (\x,\y) {$n$};
            \foreach \x/\y in {1/2.75, 6/2.75}
            \node [font=\normalsize] at (\x,\y) {$\implies$};
            \node [font=\normalsize] at (3.75, 2.75) {and};
        \end{tikzpicture}
        \caption{Slots imply the existence of values in the prefix of a configuration.}
        \label{fig:slots-imply-points}
    \end{center}
\end{figure}

The set $\SBH(k)$ can be defined by avoiding all derivations of configurations with $k + 1$ slots. To give a characterisation of these configurations, we will first define \emph{subconfigurations}. Let $c$ be a configuration and $c'$ be a configuration that can be obtained from $c$ by deleting elements of the prefix of $c$ (and standardising if necessary). In this process of deleting points, repeating slots become new slots if all of the corresponding values in the prefix are removed.
If enough points are removed from the prefix then slots may also be standardised to have different heights, but the number of slots and the relative heights of the slots will not change.
We say that $c'$ is a \emph{subconfiguration} of $c$ and we have the following useful result. Note, in our definition, we will have the same number of slots at the same relative values as the configuration, as we are only allowing removal of values in the prefix. Removing some values in the prefix could result in something which is not a configuration, in which case, we do not consider this a subconfiguration. 

\begin{lemma}\label{lem:subconfig}
    Let $c$ be a configuration and $c'$ be a subconfiguration of $c$. Then for every derivation $\sigma \in D(c)$ there exists a derivation $\sigma' \in D(c')$ such that $\sigma$ contains $\sigma'$. 
\end{lemma}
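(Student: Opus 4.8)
The plan is to produce the required $\sigma'$ by re-using, verbatim, the very sequence of insertions that produces $\sigma$ from $c$. Write $c'$ as the subconfiguration obtained from $c$ by deleting a set $X$ of prefix points (standardising as needed), and let $w = a^{(1)}_{i_1,j_1}a^{(2)}_{i_2,j_2}\cdots$ be the suffix of the horizontal insertion encoding of $\sigma$ that transforms $c$ into $\sigma$; that is, $w$ records exactly the insertions made into the slots of $c$ on the way to $\sigma$.

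First I would check that $w$ is a legal insertion word when started from $c'$. By the definition of a subconfiguration, $c$ and $c'$ have the same number of slots at the same relative heights, and deleting prefix points can only turn a repeating slot of $c$ into a new slot of $c'$, never the reverse. Hence every slot that is a new slot in $c$ remains a new slot in $c'$. Since the non-$f$ letters $u,d,m$ may only be applied to new slots, while $f$ may be applied to either a new or a repeating slot, each letter of $w$ is applicable to the correspondingly indexed slot of $c'$ at every step. Moreover, applying a given letter to $c$ and to $c'$ has an identical effect on the slot structure (same index, same local replacement by new and repeating slots, and an $f_{i,0}$ removes one slot in both), so the slot geometries of the two evolutions remain in lockstep as $w$ is read. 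Reading $w$ from $c'$ therefore yields a well-defined Cayley permutation $\sigma' \in D(c')$.

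Next I would exhibit an occurrence of $\sigma'$ inside $\sigma$. The natural candidate is the subsequence of $\sigma$ consisting of the points inherited from the surviving prefix $P' = P \setminus X$ together with all points inserted while reading $w$; equivalently, we discard exactly the positions of $\sigma$ coming from the deleted prefix points $X$. The horizontal (index) order is immediate, since both evolutions insert points into the same left-to-right positions relative to the common slots. For the vertical (value) comparisons I would use the invariant that, throughout a horizontal evolution, the relative heights of already-placed points never change and each newly inserted point acquires a height determined solely by the slot it fills and by the letter type $u,d,m,f$. Because $c$ and $c'$ share the same slot heights relative to their surviving prefix points, and $w$ is read identically, each pair among the selected points (two surviving prefix points, two inserted points, or one of each) compares the same way in $\sigma$ as in $\sigma'$. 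Thus the selected subsequence of $\sigma$ standardises to $\sigma'$, so $\sigma$ contains $\sigma'$.

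The main obstacle is the bookkeeping around a repeating slot of $c$ that becomes a new slot of $c'$: when such a slot is filled by an $f$ letter, the inserted point is a repeat of a (now deleted) prefix value in $\sigma$ but a genuinely new value in $\sigma'$. I would handle this by noting that the prefix points equal to that value all lie in $X$, hence are not selected, so the comparison of the inserted point against the surviving points is governed purely by the preserved relative height of the slot, keeping the two standardisations aligned. A clean way to discharge all of this rigorously is induction on $|X|$, reducing to the deletion of a single prefix point, where both the relative-height invariant and the new-versus-repeating slot dichotomy are simplest to verify; the general case then follows by composing single deletions.
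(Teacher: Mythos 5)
Your proof is correct and takes essentially the same approach as the paper: the paper also defines $\sigma'$ by replaying the tail of $\sigma$'s insertion encoding from $c'$ (writing $\sigma$'s encoding as $wv$, where $w$ produces $c$, and taking $\sigma'$ to be the derivation with encoding $w'v$), then concludes containment from the fact that the prefix of $c'$ sits inside the prefix of $c$ at the same values with the same slot heights. Your explicit legality check for replaying the word from $c'$ and your careful treatment of repeating slots that become new slots are details the paper leaves implicit, but they do not constitute a different argument.
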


\begin{proof}
    Let $c$ be a configuration with insertion encoding $w$ and $c'$ be the subconfiguration of $c$ with insertion encoding $w'$. Then, the insertion encoding of a Cayley permutation $\sigma \in D(c)$ can be written as $wv$. Let $\sigma' \in D(c')$ be the Cayley permutation with insertion encoding $w'v$. As the prefix of $c'$ is contained in the prefix of $c$ at the same values, and the height of the slots are the same, it follows that $\sigma$ contains $\sigma'$. 
\end{proof}

Given a configuration $c$ and subconfiguration $c'$ whose derivations must be avoided, this result tells us that avoiding the derivations $D(c)$ is implied by avoiding the derivations of $D(c')$. We're now ready to state and prove the result. 

\begin{proposition}
    \label{prop:SBH-basis}
    For each integer $k$, let $C_k$ be the set of configurations with $k + 1$ slots such that the following conditions are fulfilled.
    \begin{itemize}
        \item The prefix of $c$ is a permutation.
        \item There are no values in the prefix of $c$ larger than the top slot or smaller than the bottom slot.
        \item There are no values in the prefix which are strictly between a repeating slot and another slot.
        \item There are no three consecutive repeating slots in $c$.
        \item There are no two consecutive repeating slots at the top or bottom.
    \end{itemize}
    Then $\SBH(k)$ is the Cayley permutation class whose basis is the set of minimal Cayley permutations in the derivations of each configuration in $c \in C_k$ of size $|c|$. 
\end{proposition}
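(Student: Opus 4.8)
The plan is to prove the set equality $\SBH(k) = \Av(D)$, where $D$ denotes the collection of all size-$|c|$ derivations of the configurations $c \in C_k$. Since $B$ is by definition the set of minimal elements of $D$ under containment, we have $\Av(D) = \Av(B)$, and because a set of minimal elements is an antichain, a short argument then identifies $B$ with the basis of $\SBH(k)$. The one structural fact I would establish first is that $\SBH(k)$ is genuinely a Cayley permutation class, i.e.\ that the maximum number of slots occurring in the evolution of a Cayley permutation is monotone under containment. Given an occurrence of $\sigma$ in $\pi$, I would run the horizontal evolution of $\pi$ and, at each step that places a point of the occurrence, delete from the current configuration every prefix point outside the occurrence; the resulting subconfigurations standardise to the successive configurations in the evolution of $\sigma$, and each has no more slots than its ambient $\pi$-configuration (a repeating slot of $\sigma$ corresponds to a repeating slot of $\pi$ at the matching height, and a new slot of $\sigma$ sits inside a distinct open region of $\pi$). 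Hence $\mathrm{slots}(\sigma) \le \mathrm{slots}(\pi)$ and $\SBH(k)$ is downward closed.

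With downward closure in hand, soundness is immediate: every $d \in D$ is a derivation of some $c \in C_k$, whose evolution by construction passes through a configuration with $k+1 > k$ slots, so $d \notin \SBH(k)$; consequently any $\pi$ containing such a $d$ also lies outside $\SBH(k)$. Finiteness comes from the defining conditions on $C_k$: the prefix is a permutation (condition 1) lying in the slot range (condition 2) with nothing wasted between a repeating slot and its neighbour (condition 3), and runs of repeating slots are forbidden (conditions 4--5); these bound both the prefix length and the slot pattern in terms of $k$, so $C_k$, and hence $D$ and $B$, are finite and $\SBH(k)$ is finitely based.

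The heart of the argument is completeness: if $\pi \notin \SBH(k)$ then $\pi$ contains some element of $B$. Since the slot count drops only through an $f_{i,0}$ insertion while it must fall from its maximum back to $0$, the evolution of $\pi$ passes through a configuration $c^*$ with \emph{exactly} $k+1$ slots, and $\pi \in D(c^*)$. I would reduce $c^*$ to a configuration $c \in C_k$ by repeatedly passing to subconfigurations, deleting prefix points one at a time. First I delete every prefix point that is neither the unique separator forced between two consecutive new slots nor the anchor of a repeating slot; this removes all points above the top slot, below the bottom slot, and strictly between a repeating slot and its neighbour, enforcing conditions 1--3 while keeping all $k+1$ slots. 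Then, to enforce conditions 4--5, I delete the anchor of a repeating slot inside any run of three consecutive repeating slots (or of two at the extreme top or bottom), which converts that slot into a new slot without changing the slot count. The terminal configuration $c$ lies in $C_k$, and by Lemma~\ref{lem:subconfig} applied along this chain, $\pi \in D(c^*)$ contains a derivation of $c$; any derivation of $c$ contains the unique size-$|c|$ derivation obtained by filling each slot exactly once, which lies in $D$ and hence contains an element of $B$.

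The step I expect to be the main obstacle is the repeating-slot bookkeeping behind conditions 4--5: I must verify that deleting an anchor really yields a \emph{valid} subconfiguration with the slot count preserved. This hinges on carefully distinguishing the separators genuinely required between two adjacent \emph{new} slots (which cannot be removed without merging slots) from anchors and interior points that are free to delete, and on checking that converting a repeating slot into a new slot never leaves two new slots adjacent with nothing between them. Making this case analysis precise, and confirming that iterating the deletions terminates in a configuration satisfying all five conditions while remaining a subconfiguration of $c^*$, is where the real work lies; verifying the slot-correspondence underlying the monotonicity of the first paragraph is the secondary technical point.
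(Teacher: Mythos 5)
Your proposal is correct and follows essentially the same route as the paper: you pass through a configuration with exactly $k+1$ slots (using that the slot count only drops via $f_{i,0}$ insertions) and then reduce it to a member of $C_k$ by deleting prefix points---duplicate values, values above the top or below the bottom slot, values strictly between a repeating slot and a neighbouring slot, and anchors within runs of repeating slots---invoking Lemma~\ref{lem:subconfig} at each step, exactly as in the paper's proof; your added verification that $\SBH(k)$ is downward closed is a point the paper leaves implicit. The only slip is your claim that the size-$|c|$ derivation of $c$ is \emph{unique}: there is one such derivation for each order in which the $k+1$ slots are filled, but since the proposition's basis consists of the minimal Cayley permutations among \emph{all} size-$|c|$ derivations, exhibiting any one of them suffices and the argument stands.
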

\begin{proof}
    The basis elements of $\SBH(k)$ are the minimal Cayley permutations which have an evolution containing a configuration with $k+1$ slots, which is a superset of $C_k$. We will prove that for any configuration which has $k+1$ slots that does not satisfy the conditions there exists a smaller subconfiguration. By Lemma~\ref{lem:subconfig}, avoiding the derivations of a configuration is implied by avoiding the derivations of a subconfiguration, so we can ignore the configuration and work with the subconfiguration instead.  
    
    Let $c$ be a configuration with $k + 1$ slots. 
    
    \begin{itemize}
        \item Assume that the prefix of $c$ is not a permutation. Let $c'$ be the subconfiguration obtained by deleting every index that is not the first occurrence of each value. Then $c'$ is a subconfiguration of $c$ with a prefix that is a permutation.
        \item Assume there are values in the prefix of $c$ larger than the topslot or smaller than the bottom slot. Then let $c'$ be the subconfiguration obtained by deleting these values.
        \item Assume that there is value $n$ in the prefix strictly between a repeating slot and another slot, as shown in Figure~\ref{fig:extra-conditions}. Let $c'$ be the subconfiguration obtained by deleting all occurrences of the value $n$ in the prefix.
        \item Assume there are three consecutive repeating slots with values $n$, $n+1$, and $n + 2$ respectively, as shown in Figure~\ref{fig:3repeating-slots}. Let $c'$ be the configuration obtained by deleting all occurrences of $n + 1$. Note, the repeating slot at this height will become a new slot. Clearly, $c'$ is a smaller subconfiguration of $c$.
              
        \item Assume there are two consecutive repeating slots at the top of the configuration with values $n$ and $n + 1$, as shown in Figure~\ref{fig:2repeating-slots}. Then let $c'$ be the subconfiguration obtained by deleting all occurrence of $n + 1$, and replacing the repeating slot with a new slot. Similarly, if there are two consecutive repeating slots at the bottom with values $1$ and $2$, then we take $c'$ to be the subconfiguration obtained by deleting all occurrences of $1$ and replacing this repeating slot with a new slot.
    \end{itemize}
    
    \begin{figure}[h]
        \centering
        \begin{subfigure}{0.55\textwidth}
            \centering
            \begin{tikzpicture} 
                \foreach \x/\y in {1/2.9,3/2.7}
                \node  at (\x,\y) {$\diamond$};
                \foreach \x/\y in {1/1.6,1/3.5,3/1.9,3/3.3,6/1.6,6/3.5,8/1.9,8/3.3}
                \node  at (\x,\y) {$\vdots$};
                \foreach \x/\y in {1/2.1,3/2.3,6/2.1,6/2.9,8/2.3,8/2.7}
                \node  at (\x,\y) {$\overline{\diamond}$};
                \foreach \x/\y in {0.3/2.5,5.3/2.5}
                \node  at (\x, \y) {$n$};
                \foreach \x/\y in {4/2.5}
                \node  at (\x, \y) {and};
                \foreach \x/\y in {1.8/2.5, 6.8/2.5}
                \node  at (\x,\y) {$\leadsto$};
            \end{tikzpicture}
            \subcaption{Values between a repeating slot and another slot.}
            \label{fig:extra-conditions}
        \end{subfigure}
        \begin{subfigure}{0.45\textwidth}
            \centering
            \begin{tikzpicture} 
                \foreach \x/\y in {4/2.5}
                \node  at (\x,\y) {$\diamond$};
                \foreach \x/\y in {1/1.6, 1/3.6, 4/1.6, 4/3.6}
                \node  at (\x,\y) {$\vdots$};
                \foreach \x/\y in {1./2, 1/2.5, 1./3, 4/2, 4/3}
                \node  at (\x,\y) {$\overline{\diamond}$};
                \node  at (0, 3) {$n+2$};
                \node  at (3.3, 3) {$n+2$};
                \foreach \x/\y in {0/2.5}
                \node  at (\x, \y) {$n+1$};
                \foreach \x/\y in {0/2, 3.3/2}
                \node  at (\x, \y) {$n$};
                
                \foreach \x/\y in {1.8/2.5}
                \node  at (\x,\y) {$\leadsto$};
            \end{tikzpicture}
            \caption{Three consecutive repeating slots.}
            \label{fig:3repeating-slots}
        \end{subfigure}
        \begin{subfigure}{0.45\textwidth}
            \centering
            \begin{tikzpicture} 
                \foreach \x/\y in {12/3}
                \node  at (\x,\y) {$\diamond$};
                \foreach \x/\y in {10/2.1, 12/2.1}
                \node  at (\x,\y) {$\vdots$};
                \foreach \x/\y in {10/2.5, 10/3, 12/2.5}
                \node  at (\x,\y) {$\overline{\diamond}$};
                \foreach \x/\y in { 9.3/3}
                \node  at (\x, \y) {$n+1$};
                \foreach \x/\y in {9.3/2.5, 11.5/2.5}
                \node  at (\x, \y) {$n$};
                
                \foreach \x/\y in {10.8/2.5}
                \node  at (\x,\y) {$\leadsto$};
            \end{tikzpicture}
            \caption{Two consecutive repeating slots at the top of a configuration.}
            \label{fig:2repeating-slots}
        \end{subfigure}
        \caption{Configurations which do not satisfy the conditions of Proposition~\ref{prop:SBH-basis} have smaller subconfigurations.}
    \end{figure}
    
    In each case, we reduce the size of the configuration, so the process of repeatedly replacing $c$ with $c'$ will terminate with a configuration in $C_k$. Hence, $\SBH(k)$ is defined by avoiding all of the derivations of the configurations in $C_k$. 
    %
\end{proof}

To compute the basis of $\SBH(k)$ we consider all configurations with $k + 1$ slots in which the prefix forms a permutation which we call $\pi_L$. Any derivation of such a configuration will contain a derivation $\sigma$ of size $|\pi_L| + k + 1$ from the same configuration. The values to the right of the prefix $\pi_L$ will form a permutation of size $k+1$ which we denote $\pi_R$. As there can not be three consecutive repeating slots, there are no three consecutive values in $\pi_L$ or $\pi_R$. 
This gives a bound on the size of the Cayley permutations in the basis of $\SBH(k)$, stated in Remark~\ref{rmk:bounds-on-pi_L}.

\begin{remark}
    \label{rmk:bounds-on-pi_L}
    A basis element $\pi_L\pi_R$ in $\SBH(k)$ will have $|\pi_R|= k + 1$, $\lfloor \frac{k+1}{2} \rfloor \leq |\pi_L|$ due to the restriction on repeating slots and $|\pi_L| \leq k$ which is achieved if all slots are new slots. 
\end{remark}

A slot-bounded Cayley permutation class is a subclass of $\SBH(k)$ for some $k$. In order to characterise the slot-bounded Cayley permutation classes we need some definitions. 

A size $n + k$  permutation $\pi$ is a \emph{horizontal juxtaposition} of the size $n$  permutation $\sigma$ and the size $k$  permutation $\tau$ if the size $n$ prefix of $\pi$ standardises to $\sigma$ and the size $k$ suffix of $\pi$ standardises to $\tau$. For example, a horizontal juxtaposition of $1423$ and $312$ is $1624735$. 

There are four classes of horizontal juxtapositions which we will need for our theory. They come from horizontally juxtaposing strictly increasing and strictly decreasing sequences. We denote these classes $\mathcal{H}_{I,I}$, $\mathcal{H}_{I,D}$, $\mathcal{H}_{D,I}$, and $\mathcal{H}_{D,D}$. For example, $13245$ is a juxtaposition of the strictly increasing sequence $12$ and the strictly increasing sequence $123$, so is in $\mathcal{H}_{I,I}$, 
and $521346$ is a juxtaposition of the strictly decreasing sequence $21$ and the strictly increasing sequence $1234$, so is in $\mathcal{H}_{D,I}$, as shown in Figure~\ref{fig:examples-of-horizontal-jux}.

\begin{figure}[h]
    \begin{center}
        \resizebox{10cm}{!}{
            \begin{tikzpicture}
                \draw[step=1cm,gray,thin] (0.5,0.5) grid (5.5,5.5);
                \foreach \x/\y in {1/1, 2/3, 3/2, 4/4, 5/5}
                \fill[black] (\x,\y) circle (0.14cm);
                \draw[thick, dashed] (2.5, 0.5) -- (2.5, 5.5);
                \node[font = \Large] at (3, 0) {$13245 \in \mathcal{H}_{I,I}$ is a horizontal};
                \node[font = \Large] at (3, -0.7) {juxtaposition of $12$ and $123$};
            \end{tikzpicture}
            \phantom{----}
            \begin{tikzpicture}
                \draw[step=1cm,gray,thin] (0.5,0.5) grid (6.5,6.5);
                \foreach \x/\y in {1/5, 2/2, 3/1, 4/3, 5/4, 6/6}
                \fill[black] (\x,\y) circle (0.14cm);
                \draw[thick, dashed] (2.5, 0.5) -- (2.5, 6.5);
                \node[font = \Large] at (3, 0) {$521346 \in \mathcal{H}_{D,I}$ is a horizontal};
                \node[font = \Large] at (3, -0.7) {juxtaposition of $21$ and $1234$};
            \end{tikzpicture}
        }
    \end{center}
    \caption{The horizontal juxtapositions $13124 \in \mathcal{H}_{I,I}$ and $421234 \in \mathcal{H}_{D,I}$.}
    \label{fig:examples-of-horizontal-jux}
\end{figure}

A \emph{horizontal alternation} is a permutation of size $2n$ for which the prefix of size $n$ is a permutation of the odd values and the suffix of size $n$ is a permutation of the even values. For example, $135264$ is a horizontal alternation. By symmetry Lemma~\ref{lem: alternation implies juxtaposition} can be immediately adapted to Lemma~\ref{lem: hori alternation implies juxtaposition} for horizontal alternations.

\begin{lemma}
    \label{lem: hori alternation implies juxtaposition}
    For each $\mathcal{H} \in \{\mathcal{H}_{I,I}, \mathcal{H}_{I,D}, \mathcal{H}_{D,I}, \mathcal{H}_{D,D}\}$, the size $2n$ horizontal alternation in $\mathcal{H}$ contains every horizontal juxtaposition in $\mathcal{H}$ of size up to $n$.
\end{lemma}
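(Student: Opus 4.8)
The plan is to obtain the statement from Lemma~\ref{lem: alternation implies juxtaposition} by applying the inverse map $\pi \mapsto \pi^{-1}$, so that ``splitting by value'' becomes ``splitting by position'', rather than repeating the argument. First I would record the three facts that make this work. (i) The inverse map is an involution that preserves containment: $\pi$ contains $\rho$ if and only if $\pi^{-1}$ contains $\rho^{-1}$. (ii) For $a,b \in \{I,D\}$, $\pi$ is a horizontal juxtaposition of $\sigma$ and $\tau$ if and only if $\pi^{-1}$ is a vertical juxtaposition of $\sigma^{-1}$ and $\tau^{-1}$; since the identity and reverse permutations are their own inverses, this sends $\mathcal{H}_{a,b}$ exactly to $\mathcal{V}_{a,b}$ and preserves size. (iii) The inverse of the size $2n$ horizontal alternation in $\mathcal{H}_{a,b}$ is the size $2n$ vertical alternation in $\mathcal{V}_{a,b}$.

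The bulk of the work is checking (ii) and (iii). For (ii), if the small $n$ values of a permutation $\rho$ (those at most $n$) occupy positions $q_1 < \cdots < q_n$ with induced pattern $\sigma$, then in $\rho^{-1}$ the first $n$ entries are $q_{\sigma^{-1}(1)}, \ldots, q_{\sigma^{-1}(n)}$, which standardise to $\sigma^{-1}$; the analogous computation on the large values puts $\tau^{-1}$ on the last $n$ entries, and this is exactly the horizontal-to-vertical translation of a juxtaposition. For (iii), the vertical alternation $a_1 b_1 \cdots a_n b_n$ places the small values $\{1,\dots,n\}$ at the odd positions and the large values at the even positions; taking inverses turns this into the permutation whose first $n$ entries are the odd values and whose last $n$ entries are the even values, i.e.\ the horizontal alternation, and tracking whether the small values increase or decrease shows the monotone types $a$ and $b$ are preserved.

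With these facts in hand the lemma is immediate: given a horizontal juxtaposition $\pi$ of size at most $n$ in $\mathcal{H}_{a,b}$, its inverse $\pi^{-1}$ is a vertical juxtaposition of size at most $n$ in $\mathcal{V}_{a,b}$, so by Lemma~\ref{lem: alternation implies juxtaposition} the size $2n$ vertical alternation contains $\pi^{-1}$; applying the containment-preserving inverse map, the size $2n$ horizontal alternation contains $\pi$. The main obstacle is purely the bookkeeping in steps (ii) and (iii): one must keep straight that the inverse swaps the roles of value and position, so that the value-split defining a vertical juxtaposition corresponds to the position-split defining a horizontal one, and confirm the monotone labels match. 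Should one prefer to avoid the symmetry, the proof of Lemma~\ref{lem: alternation implies juxtaposition} transcribes directly: writing the horizontal alternation as a prefix of odd values followed by a suffix of even values, for each value of $\pi$ taken in increasing order select the next unused prefix entry if that value lies in $\sigma$ and the next unused suffix entry if it lies in $\tau$; the alternating odd and even values realise the exact value-interleaving of $\pi$, while monotonicity of the prefix and suffix guarantees the selected subsequences standardise to $\sigma$ and $\tau$.
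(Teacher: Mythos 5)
Your proposal is correct and takes essentially the same approach as the paper: the paper offers no written proof, stating only that ``by symmetry Lemma~\ref{lem: alternation implies juxtaposition} can be immediately adapted,'' and your inverse-map argument (with the checks that inverses preserve containment, exchange $\mathcal{H}_{a,b}$ with $\mathcal{V}_{a,b}$, and carry the horizontal alternation to the vertical one) is precisely a careful spelling-out of that intended symmetry, valid here because all four classes $\mathcal{H}_{a,b}$ consist of permutations. Your alternative direct transcription of the Lemma~\ref{lem: alternation implies juxtaposition} selection argument is also sound, so there is nothing to fix.
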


We are ready to prove the main theorem of this section, Theorem \ref{thm:slot-bounded-horizontal}.

\begin{theorem}
    \label{thm:slot-bounded-horizontal}
    A Cayley permutation class $\Av(B)$ is a subclass of $\SBH(k)$ for some $k$ if and only if $B$ contains a permutation from each of the classes $\mathcal{H}_{I,I}$, $\mathcal{H}_{I,D}$, $\mathcal{H}_{D,I}$, and $\mathcal{H}_{D,D}$.
\end{theorem}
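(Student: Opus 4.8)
The plan is to prove both directions by transposing the argument for the vertical analogue, Theorem~\ref{thm: no vertical juxtapositions iff subclass of SB(k)}, exploiting one structural simplification: in the horizontal setting both the prefix $\pi_L$ and the suffix $\pi_R$ of a basis element of $\SBH(k)$ are genuine permutations (Proposition~\ref{prop:SBH-basis} and Remark~\ref{rmk:bounds-on-pi_L}). This is exactly why only the four classes $\mathcal{H}_{I,I}, \mathcal{H}_{I,D}, \mathcal{H}_{D,I}, \mathcal{H}_{D,D}$ appear: the ``constant'' option $C$ of the vertical case arose from repeated values, which cannot occur among distinct values, so every monotone subsequence we extract will be strictly increasing or strictly decreasing.

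For the forward direction, assume $\Av(B) \subseteq \SBH(k)$. For each class $\mathcal{H}_{a,b}$ I would exhibit an explicit witness lying in $\mathcal{H}_{a,b}$ but not in $\SBH(k)$, namely the horizontal alternation of size $2(k+1)$ in $\mathcal{H}_{a,b}$. A short direct check of its horizontal evolution shows that after its size-$(k+1)$ prefix has been placed, the configuration already contains more than $k$ slots, so the witness is not in $\SBH(k)$, hence not in $\Av(B)$, and therefore contains some element of $B$. Since each $\mathcal{H}_{a,b}$ is a downward-closed permutation class, that element is itself in $\mathcal{H}_{a,b}$; thus $B$ meets all four classes.

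For the reverse direction, let $n$ be the largest size of a witness of $B$ among the four classes and set $k$ to be a suitable polynomial in $n$, for instance $k = n^4$. Let $\sigma$ be a basis element of $\SBH(k)$; by Proposition~\ref{prop:SBH-basis} I may assume it derives from a configuration $c \in C_k$ with prefix permutation $\pi_L$ and suffix permutation $\pi_R$ of size $k+1$. Ordering the $k+1$ slots by height, the filled suffix values $s_1 < \cdots < s_{k+1}$ and the prefix values separating consecutive slots interleave in height, while every prefix position precedes every suffix position. I would then apply the Erd\H{o}s--Szekeres theorem~\cite{Erdos1935} twice: first to the positions of $s_1, \dots, s_{k+1}$ to extract a height-ordered subsequence of length at least $n^2$ that is monotone in position, fixing the suffix type $b \in \{I, D\}$; then to the prefix values separating the surviving slots to extract a further monotone subsequence of length at least $n$, fixing the prefix type $a \in \{I, D\}$. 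Because $\pi_L$ and $\pi_R$ have no repeated values, both applications return strictly monotone subsequences, so only $I$ and $D$ occur. The resulting $n$ prefix points and $n$ suffix points interleave in height with all prefix positions to the left, hence form a horizontal alternation of size $2n$ in $\mathcal{H}_{a,b}$, which by Lemma~\ref{lem: hori alternation implies juxtaposition} contains the size-$\leq n$ witness of $B$ in $\mathcal{H}_{a,b}$. Thus $\sigma$ contains an element of $B$, and since $\sigma$ was arbitrary we conclude $\Av(B) \subseteq \SBH(k)$.

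The main obstacle I anticipate is the bookkeeping around repeating slots. I must check that the $C_k$ conditions of Proposition~\ref{prop:SBH-basis}---no three consecutive repeating slots, none two consecutive at the top or bottom, and no prefix value strictly between a repeating slot and another slot---guarantee enough separating prefix values to pair with the chosen slots, and that the selected prefix and suffix points genuinely interleave in height, so that the extracted pattern is a bona fide horizontal alternation rather than merely a juxtaposition that happens to sit in the class. Verifying these geometric details, together with confirming that the bound $k = n^4$ survives the two square-root losses incurred by the Erd\H{o}s--Szekeres steps, is where the real work lies; the remaining combinatorics is then a direct transposition of the vertical argument.
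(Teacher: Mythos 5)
Your forward direction is sound and is essentially the paper's: both exhibit, for each of the four classes, a monotone--monotone witness whose horizontal evolution is forced to reach a configuration with $k+1$ slots, then invoke downward closure of $\mathcal{H}_{a,b}$ (the paper uses the size-$(2k+1)$ derivations of the all-new-slot configurations in $C_k$; your size-$2(k+1)$ alternations are the same witnesses up to an index shift). The gap is in your reverse direction, and it sits precisely at the point you defer as ``bookkeeping.'' You apply Erd\H{o}s--Szekeres to \emph{all} $k+1$ suffix values $s_1,\dots,s_{k+1}$ and assert that ``the prefix values separating consecutive slots interleave in height'' with them. Both claims fail at repeating slots: by the third condition of Proposition~\ref{prop:SBH-basis} there is \emph{no} prefix value strictly between a repeating slot and an adjacent slot, and the value filling a repeating slot is equal to a value of $\pi_L$ rather than strictly between prefix values. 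Consequently, if your first Erd\H{o}s--Szekeres extraction picks up suffix values from repeating slots, the selected $2n$ points need not interleave in height, and need not even standardise to a permutation --- but the classes $\mathcal{H}_{I,I},\dots,\mathcal{H}_{D,D}$ and Lemma~\ref{lem: hori alternation implies juxtaposition} concern permutations only, so the final containment step has nothing to apply to. This is not a detail to be verified after the fact; it is the one genuinely new difficulty of the horizontal setting relative to Theorem~\ref{thm: no vertical juxtapositions iff subclass of SB(k)}, and as written your step fails.

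The repair, which is what the paper actually does, is to discard the repeating slots entirely before the first extraction: the ``no three consecutive repeating slots'' condition guarantees at least $\lfloor (k+1)/3 \rfloor$ new slots, whose filling values are exactly the values of $\pi_R$ absent from $\pi_L$ (so the eventual pattern is a genuine permutation), and between any two consecutive new slots there is always a prefix value strictly between them in height --- either a true separator or the prefix value at the height of an intervening repeating slot. This restriction costs a factor of $3$, and your proposed bound $k = n^4$ does not survive it: you would have only about $n^4/3$ usable suffix values, while getting a strictly monotone subsequence of length $n^2$ via Erd\H{o}s--Szekeres requires about $(n^2-1)^2+1$ of them, and $n^4/3 < n^4 - 2n^2 + 2$ for $n \geq 2$; after the second square-root loss you land strictly below length $n$. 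The paper takes $k = 3n^4$. Since the theorem only asserts the existence of some $k$, the fix is inexpensive, but it must be carried out explicitly --- restrict both extractions to new-slot values and inflate $k$ --- rather than left as an anticipated obstacle.
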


\begin{proof}
    Suppose that $\Av(B) \subseteq \SBH(k)$ for some $k$. Then $\Av(B)$ avoids derivations from the configurations in $C_k$, as stated in Proposition~\ref{prop:SBH-basis}. In particular, consider a configuration $c$ in $C_k$ with no repeating slots. There is a value between every pair of slots, so the $|c| = 2k + 1$. One of these configurations will have the values in strictly increasing order and another will have the values in strictly decreasing order. A derivation of size $2k + 1$ of these configurations will be formed by inserting values in strictly increasing or strictly decreasing order, creating a permutation from each of the classes $\mathcal{H}_{I,I}$, $\mathcal{H}_{I,D}$, $\mathcal{H}_{D,I}$, and $\mathcal{H}_{D,D}$. 
    As they are not in $\Av(B)$, $B$ must contain either the permutation or a subpermutation from each of these four classes. But, as each of these classes are downwards closed, any subpermutation of a permutation in the class is also in the class. Therefore, $B$ must contain a permutation from each of $\mathcal{H}_{I,I}$, $\mathcal{H}_{I,D}$, $\mathcal{H}_{D,I}$, and $\mathcal{H}_{D,D}$.
    
    Suppose $\Av(B)$ is a Cayley permutation class with $B$ containing permutations from each of the four classes of horizontal juxtapositions.
    Suppose the length of the largest horizontal juxtaposition in $B$ is $n$. We will show that there exists a nonnegative integer $k$ such that every basis element of $\SBH(k)$ contains a horizontal juxtaposition that is in $B$, so $\Av(B) \subseteq \SBH(k)$ for this $k$. 
    
    Let $k = 3n^4$ and $\sigma$ be any basis element in $\SBH(k)$. Although a smaller $k$ may work, we will show that $k = 3n^4$ is sufficient. We can write $\sigma = \pi_L\pi_R$ with $|\pi_R| = k + 1$ and $\pi_L = |\sigma| - k - 1$. From our discussion about the basis of $\SBH(k)$ it follows that $\pi_L$ and $\pi_R$ are permutations, and that $\sigma$ is a derivation of some configuration $c \in C_k$. As $c$ contains no three consecutive repeating slots, there must be at least $\lfloor \frac{k + 1}{3} \rfloor$ new slots in the configuration, which correspond to values in $\pi_R$ that are not in $\pi_L$. As $k = 3n^4$, this is at least $n^4$ values. By the Erd\H{o}s-Szekeres theorem~\cite{Erdos1935}, there is a strictly increasing or strictly decreasing subsequence of size $n^2$ in $\pi_R$, which we call $\tau$. 
    
    By Remark~\ref{rmk:bounds-on-pi_L}, there is at least $\lfloor \frac{k+1}{2} \rfloor = \lfloor \frac{3n^4 + 1}{2} \rfloor$ values in $\pi_L$. All of the values in $\pi_L$ are not in $\tau$ and since $n^2 < \lfloor \frac{3n^4 + 1}{2} \rfloor$, we can take $n^2$ values from $\pi_L$ which perfectly interleave with $\tau$. Of these $n^2$ values, by the Erd\H{o}s-Szekeres theorem~\cite{Erdos1935}, there is a strictly increasing or strictly decreasing subsequence of size $n$, say $\alpha$. We can take a subset of the values in $\tau$ that perfectly interleaves this subsequence, say $\beta$. The concatenation of $\alpha$ and $\beta$ form a horizontal alternation of size $2n$ which is contained in $\sigma$ in one of $\mathcal{H}_{I,I}$, $\mathcal{H}_{I,D}$, $\mathcal{H}_{D,I}$, or $\mathcal{H}_{D,D}$. By Lemma~\ref{lem: hori alternation implies juxtaposition}, this horizontal alternation contains every horizontal juxtaposition from the same class of size up to $n$, specifically a horizontal juxtaposition in $B$. Therefore, the basis element $\sigma$ of $\SBH(k)$ contains a basis element in $B$. As $\sigma$ was arbitrary, every basis element of $\SBH(k)$ contains a basis element in $B$, so none of them are in $\Av(B)$, hence $\Av(B) \subseteq \SBH(k)$.
\end{proof}

An algorithm similar to the one given in Section~\ref{sec:regular-algo} can be used for computing a DFA that accepts the horizontal insertion encodings of any finitely-based slot-bounded Cayley permutation class. Hence, if a finitely-based Cayley permutation class avoids arbitrarily long horizontal juxtapositions then its horizontal insertion encodings form a regular language and a DFA can be computed for it.

There are 13 Cayley permutation classes which avoid two size 3 patterns and have a set of horizontal insertion encodings which form a regular language, as shown in Table~\ref{tab:results for size 3}. 
For bases $B$ and $B'$, obtained by taking the complement or reverse of all Cayley permutations in $B$, the size $n$ Cayley permutations in $\Av(B)$ are in bijection with all size $n$ Cayley permutations in $\Av(B')$, so $\Av(B)$ and $\Av(B')$ have the same generating function. 
These 13 Cayley permutation classes correspond to 5 distinct symmetry classes.
Their generating functions and first 10 terms have been computed by an implementation of our algorithm~\cite{Bean_cperms_ins_enc_2025} and are given in Table~\ref{tab:two-size-3-generating-functions} with the corresponding references to the OEIS~\cite{oeis}.

\begin{table}[h]
    \centering
    \begin{tabular}{|c|c|c|c|c|}
        \hline
        Bases         & Generating function                                                                                                                            & First 10 terms                                & OEIS    \\
        \hline
        $Av(123,321)$ & $\frac{4 x^{4} - 2 x^{3} + 7 x^{2} - 4 x + 1}{\left(x - 1\right)^{3} \left(2 x - 1\right)}$                                                    & 1, 1, 3, 11, 37, 105, 263, 607, 1329, 2813    & A393341 \\
        \hline
        $Av(123,132)$ & $\frac{2 x^{2} - 4 x + 1}{\left(x - 1\right) \left(4 x - 1\right)}$                                                                            & 1, 1, 3, 11, 43, 171, 683, 2731, 10923, 43691 & A007583 \\
        $Av(132,312)$ &                                                                                                                                                &                                               &         \\
        \hline
        $Av(132,213)$ & $\frac{x^{4} - 6 x^{3} + 8 x^{2} - 5 x + 1}{\left(x^{2} - 4 x + 1\right) \left(2 x^{2} - 2 x + 1\right)}$                                      & 1, 1, 3, 11, 42, 159, 596, 2225, 8300, 30967  & A393343 \\
        \hline
        $Av(123,231)$ & $\frac{12 x^{6} - 56 x^{5} + 96 x^{4} - 86 x^{3} + 41 x^{2} - 10 x + 1}{\left(x - 1\right)^{2} \left(2 x - 1\right)^{3} \left(3 x - 1\right)}$ & 1, 1, 3, 11, 41, 145, 483, 1531, 4677, 13925  & A393344 \\
        \hline
    \end{tabular}
    \caption{Generating functions, first 10 terms and OEIS entry for all symmetry classes of Cayley permutations avoiding two size 3 patterns which have a set of horizontal insertion encodings that form a regular language. }
    \label{tab:two-size-3-generating-functions}
\end{table}

\section{Concluding remarks}
\label{sec:conclusion}
We have presented two generalisations of the insertion encoding of permutations introduced by Albert, Linton and Ru\v{s}kuc~\cite{Albert2005} to Cayley permutations, namely the vertical and horizontal insertion encodings. In both cases, we classified which Cayley permutation classes have a set of insertion encodings which form a regular language and provided an algorithm to find their rational generating functions, an implementation for which can be found on GitHub~\cite{Bean_cperms_ins_enc_2025}.

Albert, Linton and Ru\v{s}kuc~\cite{Albert2005} also gave a classification for permutation classes with context free grammars, but we have not given such a condition for Cayley permutations. Our motivation is to enumerate Cayley permutations classes and their classification did not provide a method for automatically computing the algebraic generating functions, but this could be a direction for further research.

Our algorithm for enumerating the slot-bounded Cayley permutation classes extended the definition of tilings for permutations as introduced in Albert \textit{et al.} \cite{Albert2005} to Cayley permutations. By following similar methods to those outlined by Albert \textit{et al.}~\cite{combexp} we can extend our implementation of tilings to create strategies that can automatically enumerate many more classes of Cayley permutations, including those which do not have insertion encodings which form a regular language.

Cayley permutations are in bijection with ordered set partitions. The unordered set partitions are in bijection with a subset of Cayley permutation, referred in the literature as \emph{restricted growth functions} \cite{Jelínek2008}. Restricted growth functions are the Cayley permutations with the additional condition that for all values $k$ and $\ell$ in the Cayley permutation, if $k < \ell$ then the first occurrence of $k$ appears before the first occurrence of $\ell$. Our algorithm can be adapted to compute the rational generating functions for pattern-avoiding restricted growth functions with either a horizontal or vertical set of insertion encodings which are regular by restricting the letters that are used. It should be possible to classify these regular cases in a similar way as we have done for Cayley permutations, but this is left for future work.

We thank the anonymous referee for their thoughtful comments which have improved the exposition of our results.

\bibliographystyle{apalike}
\bibliography{paper.bib}


\end{document}